\newtheorem{theo}{Theorem}
\newtheorem{cor}{Corollary}
\newtheorem{lem}{Lemma}
\newtheorem{prop}{Proposition}
\theoremstyle{definition}
\newtheorem{defn}{Definition}
\theoremstyle{remark}
\newtheorem{rem}{\bf Remark\/}
\newtheorem{rems}[rem]{\bf Remarks\/}
\numberwithin{equation}{section}
\definecolor{bronze}{rgb}{0.4, 0.7, 0.1}
\def\1{{\mathchoice {\rm 1\mskip-4mu l} {\rm 1\mskip-4mu l}{\rm 1\mskip-4.5mu l} {\rm 1\mskip-5mu l}}}
\newcommand{\ds}{\displaystyle}
\title[Modified spaces and Segal-Bargmann transforms]{Asymptotic results on modified Bergman-Dirichlet spaces and examples of Segal-Bargmann transforms}
\author[S. Snoun and N. Ghiloufi]{Safa Snoun and Noureddine Ghiloufi}
\email{snoun.safa@fsg.rnu.tn, noureddine.ghiloufi@fsg.rnu.tn}
\address{University of Gabes\\ Faculty of Sciences of Gabes\\ LR17ES11 Mathematics and Applications laboratery\\ 6072, Gabes, Tunisia.}
\subjclass[2010]{30H20, 30C15}
\keywords{Bergman-Dirichlet spaces, Bargmann-Dirichlet spaces, Reproducing Kernels,  Segal-Bargmann transforms}
\begin{document}

\begin{abstract}
    In this paper we start by introducing the  modified Bergman-Dirichlet space $\mathcal D_m^2(\mathbb D_R,\mu^R_{\alpha,\beta})$ and then we study its asymptotic behavior when the parameter $\alpha$ goes to infinity and to $(-1)$ to obtain respectively   the modified Bargmann-Dirichlet and the modified Hardy-Dirichlet spaces with their reproducing kernels. Finally, we give some examples of Segal-Bargmann transforms of those spaces.
\end{abstract}
\maketitle
\section{Introduction}
For every $R>0$ we set $\mathbb D_R=\mathbb D(0,R)$ the disk of $\mathbb C$ with center $0$ and radius $R$ and $\mathbb D_R^*=\mathbb D(0,R)\smallsetminus\{0\}$. For every $\alpha,\beta>-1$  we consider the probability measure $\mu^R_{\alpha,\beta}$ defined on $\mathbb D_R$ by  $$d\mu^R_{\alpha,\beta}(z):=\frac{|z|^{2\beta}\left(R^2-|z|^2\right)^\alpha }{R^{2(\alpha+\beta+1)}\mathscr B(\alpha+1,\beta+1)}dA(z)$$ where $\mathscr B$ is the beta function  and $$dA(z)=\frac{1}{\pi}dx dy=\frac{1}{\pi}rdrd\theta,\quad z=x+iy=re^{i\theta}.$$
When $R=1$ we use $\mathbb D$ and $d\mu_{\alpha,\beta}$ instead of $\mathbb D_R$ and $d\mu^R_{\alpha,\beta}$.\\

In \cite{Gh-Za} the authors introduced $\mathcal A^s(\mathbb D_R,\mu^R_{\alpha,\beta})$, the space of holomorphic functions on the punctured disk $\mathbb D^*_R$  that are $s-$integrable with respect to $\mu^R_{\alpha,\beta}$ and they proved that $\left(\mathcal A^2(\mathbb D_R,\mu^R_{\alpha,\beta}),\| .\|_{\mu^R_{\alpha,\beta}}\right)$ is a Hilbert space where the norm $\| .\|_{\mu^R_{\alpha,\beta}}$ is associated to the inherited inner product $\langle .,.\rangle_{\mu^R_{\alpha,\beta}}$ from $L^2(\mathbb D_R,\mu^R_{\alpha,\beta})$:
$$\langle f,g\rangle_{\mu^R_{\alpha,\beta}}:=\int_{\mathbb D_R}f(z)\overline{g(z)}d\mu^R_{\alpha,\beta}(z),\quad \forall\; f,g\in L^2(\mathbb D_R,\mu^R_{\alpha,\beta}).$$
For more details and further properties of these spaces and their applications in complex and functional analysis, one can see \cite{AG-Sn, Gh-Sn, Gh-Za}.\\

Throughout the paper, $\mathbb N$ will be the set of non-negative integers, i.e. $\mathbb N=\{0,1,\dots\}$ and  $\beta_0\in]-1,0]$ will be a (fixed) real number and we take $\beta=\beta_0+p$ with $p\in\mathbb N$. Many results will be expressed in terms of the hypergeometric functions:
    $$_kF_j\left(\left.
\begin{array}{c}
a_1,\dots,a_k\\
b_1,\dots,b_j
\end{array}\right|\xi\right)=\sum_{n=0}^{+\infty}\frac{(a_1)_n\dots(a_k)_n}{(b_1)_n\dots(b_j)_n}\frac{\xi^n}{n!}$$
where $(a)_n=a(a+1)\dots(a+n-1)$ is the Pochhammer symbol. (See \cite{Ma-Ob-So} for more details about hypergeometric functions).
\\
The paper contains two parts, in the first one we study the modified Bergman, Bargmann and Hardy-Dirichlet spaces while in the second one we give some examples of Segal-Bargmann transforms between subspaces of the mentioned spaces. Precisely, for $m\in\mathbb N$, we consider  the modified Bergman-Dirichlet space $\mathcal D_m^2(\mathbb D_R,\mu^R_{\alpha,\beta})$ formed by all holomorphic functions $f$ on $\mathbb D^*_R$ such that $f^{(m)}\in\mathcal A^2(\mathbb D_R,\mu^R_{\alpha,\beta})$ and we determine explicitly its reproducing kernel in terms of the hypergeometric function $_4F_3$. Namely we have
\begin{theo}\label{th1}
    For every $m\in\mathbb N,\ \alpha>-1$ and $\beta=\beta_0+p$, the reproducing kernel of the Hilbert space $\mathcal D^2_m(\mathbb D_R,\mu^R_{\alpha,\beta})$ is given by $\ds \mathbb K_{\alpha,\beta}^{m,R}(z,w)=\mathcal K_{\alpha,\beta}^{m,R}(z\overline{w})$ where
    $$\mathcal K_{\alpha,\beta}^{m,R}(\xi)=\sum_{n=0}^{m-1}\frac{(\alpha+\beta+2)_n}{(\beta+1)_n}\left(\frac{\xi}{R^2}\right)^n +\frac{\xi^m}{(m!)^2}\ _4F_3\left(\left.
\begin{array}{c}
1,1,1,\alpha+\beta+2\\
m+1,m+1, \beta+1
\end{array}\right|\frac{\xi}{R^2}\right)+A_{\alpha,\beta}^{m,R}(\xi)
    $$
    with
    $$A_{\alpha,\beta}^{m,R}(\xi)=\left\{\begin{array}{lcl}
         0&if& p\leq m\\
         \ds R^{2m}\sum_{k=0}^{p-m-1}\frac{\Gamma(\alpha+\beta+1-m-k)}{\Gamma(\beta-m-k)} \frac{\Gamma(\beta+1)}{\Gamma(\alpha+\beta+2)} \left(\frac{k!}{(m+k)!}\right)^2\left(\frac{R^2}{\xi}\right)^{k+1}& if & p>m.
      \end{array}\right.
    $$
    \end{theo}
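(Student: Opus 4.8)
The plan is to exhibit an orthogonal basis of $\mathcal D^2_m(\mathbb D_R,\mu^R_{\alpha,\beta})$ made of monomials and then read off the kernel from the series $\mathcal K^{m,R}_{\alpha,\beta}(\xi)=\sum_n \xi^n/\|z^n\|^2_{\mathcal D^2_m}$ with $\xi=z\overline w$. The basic input is the norm of a monomial in $\mathcal A^2$: writing $z=re^{i\theta}$, the angular integral annihilates all cross terms, and the substitution $t=r^2/R^2$ turns $\|z^n\|^2_{\mu^R_{\alpha,\beta}}$ into a Beta integral, yielding $\|z^n\|^2_{\mu^R_{\alpha,\beta}}=R^{2n}\,(\beta+1)_n/(\alpha+\beta+2)_n$ for every integer $n$ with $n+\beta>-1$, that is $n\ge -p$. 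By \cite{Gh-Za} these monomials form a complete orthogonal system of $\mathcal A^2$.

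Next I would sort the monomials of $\mathcal D^2_m$ by the effect of $f\mapsto f^{(m)}$. Since $(z^n)^{(m)}=n(n-1)\cdots(n-m+1)\,z^{n-m}$, a monomial $z^n$ belongs to $\mathcal D^2_m$ exactly for $n\ge 0$ and, when $p>m$, additionally for $-(p-m)\le n\le -1$. This splits the basis into three blocks: the polynomial block $0\le n\le m-1$ (on which $f^{(m)}=0$), the block $n\ge m$, and the finite negative block $-(p-m)\le n\le -1$ present only when $p>m$. The monomials stay orthogonal in each block, and the defining inner product of $\mathcal D^2_m$ assigns $\|z^n\|^2_{\mathcal D^2_m}=\|z^n\|^2_{\mu^R_{\alpha,\beta}}$ on the polynomial block and $\|z^n\|^2_{\mathcal D^2_m}=\|(z^n)^{(m)}\|^2_{\mu^R_{\alpha,\beta}}=\big(n(n-1)\cdots(n-m+1)\big)^2\,\|z^{n-m}\|^2_{\mu^R_{\alpha,\beta}}$ on the two remaining blocks; completeness in $\mathcal D^2_m$ follows from that in $\mathcal A^2$ together with $\ker\big(f\mapsto f^{(m)}\big)=\mathrm{span}\{1,z,\dots,z^{m-1}\}$.

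Assembling the three sums is then bookkeeping. The polynomial block contributes $\sum_{n=0}^{m-1}\frac{(\alpha+\beta+2)_n}{(\beta+1)_n}(\xi/R^2)^n$ directly, as the reciprocal of the Beta-norm. For $n\ge m$ I would put $j=n-m$; using the Beta-norm of $z^{n-m}$ and the identities $(1)_j=j!$ (three of them producing $(1)_j^3/j!=(j!)^2$) and $(m+1)_j=(m+j)!/m!$, the block collapses to $\frac{\xi^m}{(m!)^2}\,{}_4F_3$ with the parameters displayed in the statement. For $p>m$ the negative block, reindexed by $k=-n-1\in\{0,\dots,p-m-1\}$ and with the Pochhammer ratios rewritten through the Gamma function, reproduces exactly $A^{m,R}_{\alpha,\beta}(\xi)$.

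The main obstacle is the two non-polynomial blocks. In the block $n\ge m$ one must match three copies of $(1)_j$ and two copies of $(m+1)_j$ against the factorials $\big((n-m)!\big)^2$ and $(n!)^2$ coming from the derivative and the Beta-norm, so that the series is genuinely a ${}_4F_3$; this is the delicate step. In the negative block the Beta-norm formula must be read as the analytic continuation $\|z^\nu\|^2_{\mu^R_{\alpha,\beta}}=R^{2\nu}\Gamma(\nu+\beta+1)\Gamma(\alpha+\beta+2)/\big(\Gamma(\beta+1)\Gamma(\nu+\alpha+\beta+2)\big)$ at $\nu=n-m<0$, and one has to check that each such norm is finite and positive---equivalently $n\ge -p$---and that the resulting Gamma quotients collapse to the factors $\Gamma(\alpha+\beta+1-m-k)/\Gamma(\beta-m-k)$ appearing in $A^{m,R}_{\alpha,\beta}$.
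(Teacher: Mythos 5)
Your proposal is correct and follows essentially the same route as the paper: the same three-block splitting of the monomial basis (polynomial part $0\le n\le m-1$, the range $n\ge m$, and the finite negative range $m-p\le n\le -1$ when $p>m$), the same Beta-integral evaluation of $\|z^n\|^2_{m,\mu^R_{\alpha,\beta}}$, and the same Pochhammer identities $(1)_j=j!$ and $(m+1)_j=(m+j)!/m!$ to recognize the ${}_4F_3$ and the tail $A^{m,R}_{\alpha,\beta}$. The only point treated more carefully in the paper is the preliminary lemma showing that $\mathcal D^2_m(\mathbb D_R,\mu^R_{\alpha,\beta})$ embeds continuously in $\mathcal A^2(\mathbb D_R,\mu^R_{\alpha,\beta})$, which guarantees completeness and continuity of point evaluations before the kernel series is summed.
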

    We claim that the result is proved in  \cite{Gh-Sn} (see also \cite{AG-Sn}) when $m=0$  and in \cite{EGIMS} when $\beta=0$.\\

    To introduce the Bargmann-Dirichlet space, we remark that if we take $\alpha=\theta R^2$ for some $\theta>0$, then  using the Stirling formula, we obtain that for every $z\in\mathbb C^*$,
    $$\begin{array}{lcl}
         d\mu^R_{\theta R^2,\beta}(z)&=&\ds\frac{\Gamma(\theta R^2+\beta+2)}{R^2\Gamma(\theta R^2+1)\Gamma(\beta+1)} \left(\frac{|z|^2}{R^2}\right)^\beta\left(1-\frac{|z|^2}{R^2}\right)^{\theta R^2} dA(z)\\
         &\underset{R\to+\infty}\sim& \ds \frac{\theta^{\beta+1}}{\Gamma(\beta+1)}|z|^{2\beta}e^{-\theta|z|^2}dA(z)=:d\nu_{\theta,\beta}(z)
      \end{array}
    $$
    Thus it is essential to consider the modified Bargmann-Fock space  $\mathcal B^2(\mathbb C,\nu_{\theta,\beta})$ as the set of holomorphic functions on  $\mathbb C^*$  that are square integrable with respect to the probability measure $d\nu_{\theta,\beta}$. Similarly as in the previous case, we can define the modified Bargmann-Dirichlet space $\mathcal B^2_m(\mathbb C,\nu_{\theta,\beta})$  of order $m$ as the set of holomorphic functions $f$ on $\mathbb C^*$ such that $f^{(m)}\in\mathcal B^2(\mathbb C,\nu_{\theta,\beta})$.  We prove that $\mathcal B^2_m(\mathbb C,\nu_{\theta,\beta})$ is a Hilbert space and we will determine explicitly its reproducing kernel in terms of the hypergeometric function $_3F_3$. Moreover, we prove that this kernel is exactly the limit of $\ds \mathbb K_{\theta R^2,\beta}^{m,R}$ when $R$ goes to infinity.\\
    If we concentrate on the asymptotic behavior of $\mathcal A^s(\mathbb D,\mu_{\alpha,\beta})$ (for $R=1$) when $\alpha\to-1$, we obtain the modified Hardy  space $H_\beta^s(\mathbb D)$. Furthermore, when $s=2$ we construct the modified Hardy-Dirichlet space $H_{m,\beta}^2(\mathbb D)$  with its reproducing kernel in terms of the hypergeometric function $_3F_2$ (see Theorem \ref{th2}).\\

    Now if $\alpha>-1$ and $\beta_0\in]-1,0]$ are chosen then one can consider  $\mathcal E_p^2(\mathbb D)$ and $\mathcal O_p^2(\mathbb D)$  the subspaces of $\mathcal A^2(\mathbb D,\mu_{\alpha,\beta_0+p})$  generated by the sub-bases $(e_{2n}^p)_{n\geq0}$ and  $(e_{2n+1}^p)_{n\geq0}$ respectively where $(e_{n}^p)_{n\geq0}$ is a Hilbert basis of $\mathcal A^2(\mathbb D,\mu_{\alpha,\beta_0+p})$.  In particular, a function $f\in\mathcal E_p^2(\mathbb D)$ is even when $p$ is even and it is odd when $p$ is odd while a function $g\in\mathcal O_p^2(\mathbb D)$ is odd when $p$ is even and it is even when $p$ is odd. It follows that both $\mathcal E_p^2(\mathbb D)$ and $\mathcal O_p^2(\mathbb D)$ are Hilbert subspaces of $\mathcal A^2(\mathbb D,\mu_{\alpha,\beta_0+p})$ and
    $$\mathcal A^2(\mathbb D,\mu_{\alpha,\beta_0+p})=\mathcal E_p^2(\mathbb D)\oplus^\bot\mathcal O_p^2(\mathbb D).$$
    Analogously, for  fixed $\theta>0$ and $\beta_0\in]-1,0]$, one can consider the two corresponding subspaces $\mathscr E_p^2(\mathbb C)$ and $\mathscr O_p^2(\mathbb C)$ of $\mathcal B^2(\mathbb C,\nu_{\theta,\beta_0+p})$.\\
    The last part of this paper is devoted to give diverse examples of Segal-Bargmann transforms between those subspaces of  $\mathcal A^2(\mathbb D,\mu_{\alpha,\beta_0+p})$ and $\mathcal B^2(\mathbb C,\nu_{\theta,\beta_0+p})$.

\section{Modified Bergman-Dirichlet spaces with asymptotics}
In this section, we start by introducing the modified Bergman-Dirichlet spaces  $\mathcal D^2_m(\mathbb D_R,\mu^R_{\alpha,\beta})$ then we study their asymptotic behavior when $\alpha$ goes to infinity in order to obtain the modified Bargmann-Dirichlet spaces $\mathcal B^2_m(\mathbb C,\nu_{\theta,\beta})$. We finish this part by describing the modified Hardy-Dirichlet spaces $H_\beta^s(\mathbb D)$ as the limit case when $\alpha$ goes to $(-1)$. In each case we determine explicitly the reproducing kernel and we give the relationships between them. The first two functional spaces can be viewed as the modification of the generalized weighted Bergman-Dirichlet and Bargmann-Dirichlet spaces introduced and studied in \cite{EGIMS}.

\subsection{Modified Bergman-Dirichlet spaces of order $m$ on $\mathbb D_R$}
    Let $m\in\mathbb N$ and consider the modified Bergman-Dirichlet space $\mathcal D^2_m(\mathbb D_R,\mu^R_{\alpha,\beta})$ formed by holomorphic functions $f$ on $\mathbb D_R^*$ such that $f^{(m)}\in\mathcal A^2(\mathbb D_R,\mu^R_{\alpha,\beta})$. It is not hard to see that if $f\in\mathcal D^2_m(\mathbb D_R,\mu^R_{\alpha,\beta})$ then $0$ is a removable singularity for $f$  or  a pole with order at most $m-p$ when $p> m$. Thus the function $f$ can be written as
    $$f(z)=\sum_{n=\min(0,m-p)}^{+\infty}a_nz^n.$$
    To define a suitable norm on $\mathcal D^2_m(\mathbb D_R,\mu^R_{\alpha,\beta})$, we decompose $f$  as $f(z)=f_1(z)+f_2(z)$
    where
    \begin{equation}\label{eq2.1}
       \ds f_1(z)=\sum_{n=0}^{m-1}a_nz^n\quad\text{and}\quad
    f_2(z)=\left\{\begin{array}{lcl}
                    \ds\sum_{n=m}^{+\infty}a_nz^n&if& p\leq m\\
                    \ds\sum_{n=m-p}^{-1}a_nz^n+\sum_{n=m}^{+\infty}a_nz^n&if& p> m
                  \end{array}\right.
    \end{equation}
    and we let
    $$\|f\|_{m,\mu^R_{\alpha,\beta}}^2:=\|f_1\|_{\mu^R_{\alpha,\beta}}^2+\left\|f_2^{(m)}\right\|_{\mu^R_{\alpha,\beta}}^2.$$
    This norm is associated with the  inner product defined by $$\langle f,g\rangle_{m,\mu^R_{\alpha,\beta}}=\langle f_1,g_1\rangle_{\mu^R_{\alpha,\beta}}+\left\langle f_2^{(m)},g_2^{(m)}\right\rangle_{\mu^R_{\alpha,\beta}}$$ for  every $f,g\in\mathcal D^2_m(\mathbb D_R,\mu^R_{\alpha,\beta})$.\\

    \begin{lem}\label{lem1}
        If we set $\varphi_n(z)=z^n$,  Then for every $n,k\geq \min(0,m-p)$ we have  $$\langle \varphi_n,\varphi_k\rangle_{m,\mu^R_{\alpha,\beta}}= \left(\varepsilon_{n,\alpha,\beta}^{m,R}\right)^{-2}\delta_{n,k}$$ where $\delta_{n,k}$ is the Kronecker symbol and
        $$\ds \varepsilon_{n,\alpha,\beta}^{m,R}=\left\{\begin{array}{lcl}
             \ds\frac{(n-m)!}{R^{n-m}n!}\sqrt{\frac{(\alpha+\beta+2)_{n-m}}{(\beta+1)_{n-m}}}&if& n\geq m\\
             \ds\frac{1}{R^{(n-m)}}\sqrt{\frac{(\beta+1)_n}{(\alpha+\beta+2)_n}}&if&0\leq n\leq m-1\\
             \ds R^{m-n}\frac{(-n-1)!}{(m-n-1)!} \sqrt{\frac{(\alpha+\beta_0+2)_{n-m+p}}{(\beta_0+1)_{n-m+p}} \frac{(\beta_0+1)_p}{(\alpha+\beta_0+2)_p}}&if& m-p\leq n\leq -1
          \end{array}\right.
        $$
        The last one is valid only if $ p> m$.\\
        In particular, if we set $e_{n,\alpha,\beta}^{m,R}(z):=\varepsilon_{n,\alpha,\beta}^{m,R}z^n$, then $(e_{n,\alpha,\beta}^{m,R})_{n\geq \min(m-p,0)}$ is an orthonormal sequence in $\mathcal D^2_m(\mathbb D_R,\mu^R_{\alpha,\beta})$.
    \end{lem}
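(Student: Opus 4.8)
The plan is to reduce everything to one moment computation and then exploit the rotational invariance of $\mu^R_{\alpha,\beta}$. First I would record the basic moment. Since $d\mu^R_{\alpha,\beta}$ depends only on $|z|$, writing $z=re^{i\theta}$ and integrating out $\theta$ shows that distinct monomials are orthogonal in $L^2(\mathbb D_R,\mu^R_{\alpha,\beta})$, so that $\langle\varphi_a,\varphi_b\rangle_{\mu^R_{\alpha,\beta}}=\delta_{a,b}\,\|\varphi_a\|^2_{\mu^R_{\alpha,\beta}}$. The substitution $t=|z|^2/R^2$ then turns $\|\varphi_a\|^2_{\mu^R_{\alpha,\beta}}=\int_{\mathbb D_R}|z|^{2a}\,d\mu^R_{\alpha,\beta}(z)$ into a Beta integral, yielding
$$\|\varphi_a\|^2_{\mu^R_{\alpha,\beta}}=\frac{\mathscr B(a+\beta+1,\alpha+1)}{\mathscr B(\alpha+1,\beta+1)}\,R^{2a}=R^{2a}\frac{(\beta+1)_a}{(\alpha+\beta+2)_a},$$
which is valid as soon as $a+\beta>-1$, the condition ensuring local integrability at the origin.

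With this in hand I would split according to the decomposition $f=f_1+f_2$ of \eqref{eq2.1}. For $0\le n\le m-1$ the monomial $\varphi_n$ coincides with its $f_1$-part while its $f_2$-part is zero, so $\langle\varphi_n,\varphi_n\rangle_{m,\mu^R_{\alpha,\beta}}$ is exactly the basic moment $\|\varphi_n\|^2_{\mu^R_{\alpha,\beta}}$ computed above. For $n\ge m$ the monomial sits entirely in $f_2$, and $\varphi_n^{(m)}=\frac{n!}{(n-m)!}\,\varphi_{n-m}$; applying the moment formula with exponent $a=n-m\ge0$ produces the factor $\left(\frac{n!}{(n-m)!}\right)^2R^{2(n-m)}\frac{(\beta+1)_{n-m}}{(\alpha+\beta+2)_{n-m}}$, which is precisely $\bigl(\varepsilon^{m,R}_{n,\alpha,\beta}\bigr)^{-2}$.

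The delicate case, and the main bookkeeping obstacle, is $m-p\le n\le-1$, which can occur only when $p>m$. Here $\varphi_n$ belongs to the negative-power part of $f_2$, and for negative $n$ one has $\varphi_n^{(m)}=(-1)^m\frac{(m-n-1)!}{(-n-1)!}\,\varphi_{n-m}$ with $n-m\le-1$. I would first verify that the moment formula still applies: the exponent $a=n-m$ satisfies $a+\beta=n-m+\beta_0+p>-1$ exactly because $n\ge m-p$ and $\beta_0\in\,]-1,0]$, so this integrability threshold is what delimits the stated range. The remaining work is to rewrite the ratio
$$\frac{\Gamma(n-m+\beta+1)\,\Gamma(\alpha+\beta+2)}{\Gamma(\beta+1)\,\Gamma(n-m+\alpha+\beta+2)}$$
in Pochhammer form: substituting $\beta=\beta_0+p$ and using that $n-m+p\ge0$ is a nonnegative integer, both Gamma ratios collapse to genuine Pochhammer symbols and give $R^{2(n-m)}\frac{(\beta_0+1)_{n-m+p}}{(\alpha+\beta_0+2)_{n-m+p}}\cdot\frac{(\alpha+\beta_0+2)_p}{(\beta_0+1)_p}$, which together with the squared derivative factor reproduces the third value of $\varepsilon^{m,R}_{n,\alpha,\beta}$.

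Finally I would dispose of the off-diagonal terms for $n\ne k$. When $n$ and $k$ lie in the same part of the decomposition, orthogonality is inherited from that of distinct monomials in $L^2(\mathbb D_R,\mu^R_{\alpha,\beta})$ (in the $f_2$-cases after differentiating $m$ times, since $n-m\ne k-m$). When one index lies in the $f_1$-range and the other in the $f_2$-range, every summand of $\langle\varphi_n,\varphi_k\rangle_{m,\mu^R_{\alpha,\beta}}$ pairs a nonzero factor against a vanishing one, so the inner product is zero automatically. This yields $\langle\varphi_n,\varphi_k\rangle_{m,\mu^R_{\alpha,\beta}}=\bigl(\varepsilon^{m,R}_{n,\alpha,\beta}\bigr)^{-2}\delta_{n,k}$, and normalising shows that $(e^{m,R}_{n,\alpha,\beta})_{n\ge\min(m-p,0)}$ is orthonormal in $\mathcal D^2_m(\mathbb D_R,\mu^R_{\alpha,\beta})$.
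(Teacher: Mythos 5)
Your proof is correct and follows essentially the same route as the paper: the same Beta-integral moment computation split into the same three cases ($0\le n\le m-1$, $n\ge m$, and $m-p\le n\le -1$), with the same derivative factors and the same Gamma-to-Pochhammer rewriting in the negative-index case. The only differences are cosmetic improvements — you factor the three cases through a single moment formula, make the integrability threshold $a+\beta>-1$ explicit, and spell out the off-diagonal orthogonality that the paper dismisses as ``easy to see.''
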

    \begin{proof}
        It is easy to see that $\langle \varphi_n,\varphi_k\rangle_{m,\mu^R_{\alpha,\beta}}=0$ if $n\neq k$. Thus it suffices to compute $\|\varphi_n\|_{m,\mu^R_{\alpha,\beta}}^2$ for every $n\geq \min(0,m-p)$. To  this aim we distinguish three cases:
        \begin{enumerate}
          \item \textbf{First statement: $0\leq n\leq m-1$.} In this case we have  $\|\varphi_n\|_{m,\mu^R_{\alpha,\beta}}^2=\|\varphi_n\|_{\mu^R_{\alpha,\beta}}^2$. So using the polar coordinates we obtain
              $$\begin{array}{lcl}
                   \ds\|\varphi_n\|_{m,\mu^R_{\alpha,\beta}}^2&=&\ds \frac{1}{R^2\mathscr B(\alpha+1,\beta+1)}\int_{\mathbb D_R}|z|^{2n}\left(\frac{|z|^2}{R^2}\right)^\beta\left(1-\frac{|z|^2}{R^2}\right)^\alpha dA(z)\\
                   &=&\ds \frac{R^{2n}}{\mathscr B(\alpha+1,\beta+1)}\int_0^R\left(\frac{r^2}{R^2}\right)^{n+\beta}\left(1-\frac{r^2}{R^2}\right)^\alpha \frac{2r}{R^2}dr\\
                   &=&\ds \frac{R^{2n}}{\mathscr B(\alpha+1,\beta+1)}\int_0^1t^{n+\beta}(1-t)^\alpha dt\\
                   &=&\ds \frac{R^{2n}\mathscr B(\alpha+1,n+\beta+1)}{\mathscr B(\alpha+1,\beta+1)}=\frac{R^{2n}(\beta+1)_n}{(\alpha+\beta+2)_n}
                \end{array}$$
          \item \textbf{Second statement: $n\geq m$.} In this case  we have $\|\varphi_n\|_{m,\mu^R_{\alpha,\beta}}^2=\|\varphi_n^{(m)}\|_{\mu^R_{\alpha,\beta}}^2$. Since
          $$\varphi_n^{(m)}(z)=\frac{n!}{(n-m)!}z^{n-m}$$
          then using the same technique as in the first statement, we obtain
          $$\begin{array}{lcl}
                   \ds\|\varphi_n\|_{m,\mu^R_{\alpha,\beta}}^2&=&\ds \frac{R^{2(n-m)}}{\mathscr B(\alpha+1,\beta+1)}\left(\frac{n!}{(n-m)!}\right)^2\int_0^1t^{n-m+\beta}(1-t)^\alpha dt\\
                   &=&\ds \frac{R^{2(n-m)}\mathscr B(\alpha+1,n-m+\beta+1)}{\mathscr B(\alpha+1,\beta+1)}\left(\frac{n!}{(n-m)!}\right)^2\\
                   &=&\ds\frac{R^{2(n-m)}(\beta+1)_{n-m}}{(\alpha+\beta+2)_{n-m}}\left(\frac{n!}{(n-m)!}\right)^2
                \end{array}$$
          \item \textbf{Third statement: $m-p\leq n\leq -1$} (possible only in case $p>m$). Again as in the second statement, we have
          $$\|\varphi_n\|_{m,\mu^R_{\alpha,\beta}}^2=\|\varphi_n^{(m)}\|_{\mu^R_{\alpha,\beta}}^2\text{ and } \varphi_n^{(m)}(z)=(-1)^m\frac{(m-n-1)!}{(-n-1)!}z^{n-m}$$
          Hence we conclude that
          $$\begin{array}{lcl}
                   \ds\|\varphi_n\|_{m,\mu^R_{\alpha,\beta}}^2&=&\ds \frac{R^{2(n-m)}\mathscr B(\alpha+1,n-m+\beta+1)}{\mathscr B(\alpha+1,\beta+1)}\left(\frac{(m-n-1)!}{(-n-1)!}\right)^2\\
                   &=&\ds R^{2(n-m)}\frac{(\beta_0+1)_{n-m+p}}{(\alpha+\beta_0+2)_{n-m+p}} \frac{(\alpha+\beta_0+2)_p}{(\beta_0+1)_p} \left(\frac{(m-n-1)!}{(-n-1)!}\right)^2
                \end{array}$$
        \end{enumerate}
    \end{proof}
    \begin{lem}
    For every $m\in\mathbb N$, we have $\mathcal D^2_m(\mathbb D_R,\mu^R_{\alpha,\beta})\subset \mathcal A^2(\mathbb D_R,\mu^R_{\alpha,\beta})$ and the canonical injection is continuous.\\
    In particular, the space $\mathcal D^2_m(\mathbb D_R,\mu^R_{\alpha,\beta})$ is a Hilbert space and the evaluation form
    $f\longmapsto f(z)$ is continuous on $\mathcal D^2_m(\mathbb D_R,\mu^R_{\alpha,\beta})$ for every $z\in\mathbb D^*_R$.
    \end{lem}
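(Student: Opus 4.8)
The plan is to push everything through the orthogonal system of monomials furnished by Lemma \ref{lem1}, since both inner products $\langle\cdot,\cdot\rangle_{\mu^R_{\alpha,\beta}}$ and $\langle\cdot,\cdot\rangle_{m,\mu^R_{\alpha,\beta}}$ are simultaneously diagonalized by the $\varphi_n(z)=z^n$ (the angular integration annihilates every cross term $\langle\varphi_n,\varphi_k\rangle$ with $n\neq k$). Writing $f\in\mathcal D^2_m(\mathbb D_R,\mu^R_{\alpha,\beta})$ through its Laurent expansion $f=\sum_{n\geq\min(0,m-p)}a_n\varphi_n$, Pythagoras' identity for the orthogonal family (applied to the partial sums and then passed to the limit) gives $\|f\|^2_{m,\mu^R_{\alpha,\beta}}=\sum_n|a_n|^2\big(\varepsilon_{n,\alpha,\beta}^{m,R}\big)^{-2}$ and, read in $[0,+\infty]$, $\|f\|^2_{\mu^R_{\alpha,\beta}}=\sum_n|a_n|^2\|\varphi_n\|^2_{\mu^R_{\alpha,\beta}}$. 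Everything then reduces to controlling, term by term, the ratio $\rho_n:=\|\varphi_n\|^2_{\mu^R_{\alpha,\beta}}\big(\varepsilon_{n,\alpha,\beta}^{m,R}\big)^{2}$.

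For the continuity of the canonical injection I would bound $\rho_n$ uniformly in $n$. When $0\leq n\leq m-1$ one has $\rho_n=1$ by the very definition of the norm, and there are only finitely many indices $m-p\leq n\leq -1$, contributing a finite supremum. The sole asymptotic issue is the range $n\geq m$, where the two explicit formulas of Lemma \ref{lem1} give
$$\rho_n=R^{2m}\frac{(\beta+1)_n}{(\beta+1)_{n-m}}\frac{(\alpha+\beta+2)_{n-m}}{(\alpha+\beta+2)_n}\left(\frac{(n-m)!}{n!}\right)^2.$$
Each Pochhammer/factorial block is a product of $m$ consecutive factors, so the first behaves like $n^m$, the second like $n^{-m}$ and the third like $n^{-2m}$; hence $\rho_n\sim R^{2m}n^{-2m}\to0$. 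Consequently $C:=\sup_n\rho_n<+\infty$, and applying the term-by-term bound $\|\varphi_n\|^2_{\mu^R_{\alpha,\beta}}\leq C\,\|\varphi_n\|^2_{m,\mu^R_{\alpha,\beta}}$ to the partial sums yields $\|f\|_{\mu^R_{\alpha,\beta}}\leq\sqrt{C}\,\|f\|_{m,\mu^R_{\alpha,\beta}}$. This simultaneously proves $f\in\mathcal A^2(\mathbb D_R,\mu^R_{\alpha,\beta})$ and the continuity of the inclusion.

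To obtain completeness I would identify $\mathcal D^2_m(\mathbb D_R,\mu^R_{\alpha,\beta})$ isometrically with $\ell^2$ via the coefficient map $f\longmapsto(\langle f,e_{n,\alpha,\beta}^{m,R}\rangle_{m,\mu^R_{\alpha,\beta}})_n=(a_n(\varepsilon^{m,R}_{n,\alpha,\beta})^{-1})_n$, which is isometric by the Parseval identity above. The substantive point is surjectivity: given $(c_n)\in\ell^2$, the series $\sum_n c_n\,e_{n,\alpha,\beta}^{m,R}=\sum_n c_n\varepsilon^{m,R}_{n,\alpha,\beta}z^n$ must define an element of the space. Writing $\varepsilon^{m,R}_{n,\alpha,\beta}=R^{-n}q(n)$ with $q(n)$ of polynomial growth (read off from $(a)_k/(b)_k\sim k^{a-b}$), boundedness of $(c_n)$ gives $(|c_n|\varepsilon^{m,R}_{n,\alpha,\beta})^{1/n}\to R^{-1}$, so the power part has radius of convergence at least $R$ and the finitely many negative powers are harmless; the sum is thus a genuine holomorphic function $f$ on $\mathbb D^*_R$ with a pole at $0$ of the admissible order. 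A term-by-term computation of $\|f_1\|^2_{\mu^R_{\alpha,\beta}}$ and $\|f_2^{(m)}\|^2_{\mu^R_{\alpha,\beta}}$ returns $\sum_n|c_n|^2$, so $f^{(m)}\in\mathcal A^2(\mathbb D_R,\mu^R_{\alpha,\beta})$ and $f\in\mathcal D^2_m(\mathbb D_R,\mu^R_{\alpha,\beta})$ with $\|f\|_{m,\mu^R_{\alpha,\beta}}=\|(c_n)\|_{\ell^2}$. Being isometric to $\ell^2$, the space is a Hilbert space.

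Finally, continuity of the evaluation $f\mapsto f(z)$ at a fixed $z\in\mathbb D^*_R$ follows by composing the continuous inclusion into the reproducing kernel Hilbert space $\mathcal A^2(\mathbb D_R,\mu^R_{\alpha,\beta})$ with the continuous evaluation on $\mathcal A^2$, giving $|f(z)|\leq C_z\|f\|_{\mu^R_{\alpha,\beta}}\leq\sqrt{C}\,C_z\|f\|_{m,\mu^R_{\alpha,\beta}}$. Alternatively, Cauchy--Schwarz in the Parseval identity gives directly $|f(z)|\leq\|f\|_{m,\mu^R_{\alpha,\beta}}\big(\sum_n(\varepsilon^{m,R}_{n,\alpha,\beta})^2|z|^{2n}\big)^{1/2}$, the series converging precisely because $0<|z|<R$; this already exhibits the candidate reproducing kernel $\sum_n(\varepsilon^{m,R}_{n,\alpha,\beta})^2(z\overline w)^n$ whose closed form is computed in Theorem \ref{th1}. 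I expect the genuinely delicate step to be the surjectivity in the completeness argument, namely verifying that the $\ell^2$ growth condition on the coefficients really forces holomorphy on all of $\mathbb D^*_R$ with the correct pole order at the origin, together with the clean splitting of the norm; the uniform bound on $\rho_n$, by contrast, is a routine Pochhammer asymptotic.
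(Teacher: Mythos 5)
Your proof is correct and follows essentially the same route as the paper: both arguments compare the two norms term by term on the orthogonal family $(\varphi_n)$ from Lemma \ref{lem1}, bounding the ratio $\|\varphi_n\|^2_{\mu^R_{\alpha,\beta}}/\|\varphi_n\|^2_{m,\mu^R_{\alpha,\beta}}$ uniformly in $n$ (the paper obtains the explicit constant $\max(1,R^{2m})$ by observing that each factor $\frac{\beta+n-k}{\alpha+\beta+1+n-k}\leq 1$, where you invoke the Pochhammer asymptotics $\rho_n\sim R^{2m}n^{-2m}$). The only substantive difference is that you also write out the completeness argument via the isometric identification with $\ell^2$ (radius of convergence of $\sum c_n\varepsilon^{m,R}_{n,\alpha,\beta}z^n$, admissible pole order, Parseval), a step the paper leaves implicit once the continuous injection is established.
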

    \begin{proof}
    Let $m\in\mathbb N$. If $m=0$ there is nothing to prove, hence we can assume that $m\geq1$. Let so  $f\in \mathcal D^2_m(\mathbb D_R,\mu^R_{\alpha,\beta})$ with $$f(z)=\sum_{n=\min(0,m-p)}^{+\infty}a_nz^n=\sum_{n=\min(0,m-p)}^{+\infty}a_n\varphi_n(z).$$
    If we set $N_f=0$ when $p\leq m$ and
    $$N_f=\sum_{n=m-p}^{-1}|a_n|^2R^{2(n-m)}\frac{(\beta_0+1)_{n-m+p}}{(\alpha+\beta_0+2)_{n-m+p}} \frac{(\alpha+\beta_0+2)_p}{(\beta_0+1)_p} \left(\frac{(m-n-1)!}{(-n-1)!}\right)^2$$ when $ p>m$, then using Lemma \ref{lem1}, we obtain
    \begin{equation}\label{eq2.2}
    \begin{array}{lcl}
                   \ds\|f\|_{m,\mu^R_{\alpha,\beta}}^2&=&\ds \sum_{n=\min(0,m-p)}^{+\infty}|a_n|^2\|\varphi_n\|_{m,\mu^R_{\alpha,\beta}}^2\\
                   &=&\ds N_f+\sum_{n=0}^{m-1}|a_n|^2\frac{R^{2n}(\beta+1)_n}{(\alpha+\beta+2)_n}\ds+\sum_{n=m}^{+\infty}|a_n|^2 \frac{R^{2(n-m)}(\beta+1)_{n-m}}{(\alpha+\beta+2)_{n-m}}\left(\frac{n!}{(n-m)!}\right)^2
                \end{array}
    \end{equation}
    It is simple to see that when $p>m$, one has
    \begin{equation}\label{eq2.3}
    \begin{array}{l}
         \ds\sum_{n=m-p}^{-1}|a_n|^2\|\varphi_n\|_{\mu^R_{\alpha,\beta}}^2=\ds\sum_{n=m-p}^{-1}|a_n|^2 \frac{R^{2n}(\beta+1)_n}{(\alpha+\beta+2)_n}\\
         =\ds R^{2m}\sum_{n=m-p}^{-1}|a_n|^2R^{2(n-m)}\frac{\Gamma(\beta+1+n-m)}{\Gamma(\alpha+\beta+2+n-m)} \frac{\Gamma(\alpha+\beta+2)}{\Gamma(\beta+1)}\prod_{k=0}^{m-1} \frac{\beta+n-k}{\alpha+\beta+1+n-k}\\
         \leq\ds R^{2m}\sum_{n=m-p}^{-1}|a_n|^2R^{2(n-m)}\frac{\Gamma(\beta+1+n-m)}{\Gamma(\alpha+\beta+2+n-m)} \frac{\Gamma(\alpha+\beta+2)}{\Gamma(\beta+1)} \left(\frac{(m-n-1)!}{(-n-1)!}\right)^2=R^{2m}N_f
      \end{array}
    \end{equation}

      Again for the third term of the right-hand side of \eqref{eq2.2},
      \begin{equation}\label{eq2.4}
      \begin{array}{l}
         \ds\sum_{n=m}^{+\infty}|a_n|^2\|\varphi_n\|_{\mu^R_{\alpha,\beta}}^2=\ds\sum_{n=m}^{+\infty}|a_n|^2 \frac{R^{2n}(\beta+1)_n}{(\alpha+\beta+2)_n}\\
         =\ds R^{2m}\sum_{n=m}^{+\infty}|a_n|^2R^{2(n-m)}\frac{\Gamma(\beta+1+n-m)}{\Gamma(\alpha+\beta+2+n-m)} \frac{\Gamma(\alpha+\beta+2)}{\Gamma(\beta+1)}\prod_{k=0}^{m-1} \frac{\beta+n-k}{\alpha+\beta+1+n-k}\\
         \leq\ds R^{2m}\sum_{n=m}^{+\infty}|a_n|^2R^{2(n-m)}\frac{\Gamma(\beta+1+n-m)}{\Gamma(\alpha+\beta+2+n-m)} \frac{\Gamma(\alpha+\beta+2)}{\Gamma(\beta+1)}\left(\frac{n!}{(n-m)!}\right)^2\\
         \leq\ds R^{2m}\sum_{n=m}^{+\infty}|a_n|^2 \frac{R^{2(n-m)}(\beta+1)_{n-m}}{(\alpha+\beta+2)_{n-m}}\left(\frac{n!}{(n-m)!}\right)^2
      \end{array}
      \end{equation}
      Combining \eqref{eq2.3} and \eqref{eq2.4} with \eqref{eq2.2} we conclude that $f\in \mathcal A^2(\mathbb D_R,\mu^R_{\alpha,\beta})$ and $$\|f\|_{\mu^R_{\alpha,\beta}}\leq \max(1,R^m)\|f\|_{m,\mu^R_{\alpha,\beta}}.$$
    \end{proof}
    Now we can prove Theorem \ref{th1}:
    \begin{proof}
    It is not hard to see that the sequence $(e_{n,\alpha,\beta}^{m,R})_{n\geq \min(m-p,0)}$ is a Hilbert basis for $\mathcal D^2_m(\mathbb D_R,\mu^R_{\alpha,\beta})$. Thus, thanks to \cite{Kr} and using Lemma \ref{lem1}, we obtain that the reproducing kernel is given by
    $$\ds \mathbb K_{\alpha,\beta}^{m,R}(z,w)=\sum_{n=\min(m-p,0)}^{+\infty}e_{n,\alpha,\beta}^{m,R}(z)e_{n,\alpha,\beta}^{m,R}(\overline{w})=\mathcal K_{\alpha,\beta}^{m,R}(z\overline{w})$$
    where
    $$\begin{array}{lcl}
         \ds \mathcal K_{\alpha,\beta}^{m,R}(\xi)&=&\ds A_{\alpha,\beta}^{m,R}(\xi)+ \sum_{n=0}^{m-1}\frac{(\alpha+\beta+2)_n}{(\beta+1)_n}\left(\frac{\xi}{R^2}\right)^n \ds+  \sum_{n=m}^{+\infty}\frac{(\alpha+\beta+2)_{n-m}}{R^{2(n-m)}(\beta+1)_{n-m}}\left(\frac{(n-m)!}{n!}\right)^2\xi^n\\
         &=&\ds A_{\alpha,\beta}^{m,R}(\xi)+ \sum_{n=0}^{m-1}\frac{(\alpha+\beta+2)_n}{(\beta+1)_n}\left(\frac{\xi}{R^2}\right)^n \ds+ \underbrace{\sum_{s=0}^{+\infty}\frac{(\alpha+\beta+2)_s}{R^{2s}(\beta+1)_s}\left(\frac{s!}{(s+m)!}\right)^2\xi^{s+m}}_{ =:F_{\alpha,\beta}^{m,R}(\xi)}
      \end{array}$$
      with $A_{\alpha,\beta}^{m,R}(\xi)=0$ when $ p\leq m$ and
    $$A_{\alpha,\beta}^{m,R}(\xi)=\ds R^{2m}\sum_{k=0}^{p-m-1}\frac{\Gamma(\alpha+\beta+1-m-k)}{\Gamma(\beta-m-k)} \frac{\Gamma(\beta+1)}{\Gamma(\alpha+\beta+2)} \left(\frac{k!}{(m+k)!}\right)^2\left(\frac{R^2}{\xi}\right)^{k+1}$$ when $p>m$.\\
    Using the fact that $s!=(1)_s$ and $(s+m)!=m!(m+1)_s$, one can show that
    $$\ds F_{\alpha,\beta}^{m,R}(\xi)= \ds \frac{\xi^m}{(m!)^2}\sum_{s=0}^{+\infty} \frac{(s!)^3(\alpha+\beta+2)_s}{(\beta+1)_s\left((m+1)_s\right)^2} \frac{1}{s!}\left(\frac{\xi}{R^2}\right)^s=\ds \frac{\xi^m}{(m!)^2}\ _4F_3\left(\left.
\begin{array}{c}
1,1,1,\alpha+\beta+2\\
m+1,m+1, \beta+1
\end{array}\right|\frac{\xi}{R^2}\right)
    $$
    and the proof of Theorem \ref{th1} is accomplished.
    \end{proof}
    \begin{rem}
        Using a result of \cite{Ma-Ob-So} page 47 and the main result of \cite{Gh-Sn}, we obtain for $m=0$:
      \begin{equation}\label{eq2.5}
      \begin{array}{lcl}
           \mathcal K_{\alpha,\beta}^{0,R}(\xi)&=&\ds\frac{(\beta_0+1)_p}{(\alpha+\beta_0+2)_p}\frac{R^{2p}}{\xi^p}\ _2F_1\left(\left.
\begin{array}{cc}
1,&\alpha+\beta_0+2\\
& \beta_0+1
\end{array}\right|\frac{\xi}{R^2}\right).\\
&=&\ds \frac{(\beta_0+1)_p}{(\alpha+\beta_0+2)_p}\frac{R^{2p}}{\xi^p}\frac{1}{\left(1-\frac{\xi}{R^2}\right)^{\alpha+2}}\ _2F_1\left(\left.
\begin{array}{cc}
\beta_0,&-(\alpha+1)\\
& \beta_0+1
\end{array}\right|\frac{\xi}{R^2}\right).\\
&=&\ds\frac{(\beta_0+1)_p}{(\alpha+\beta_0+2)_p}\frac{\beta_0R^{2p}}{\xi^p\left(1-\frac{\xi}{R^2}\right)^{\alpha+2}}\sum_{n=0}^{+\infty} \frac{(-1)^n}{n+\beta_0} {\alpha+1\choose n}\left(\frac{\xi}{R^2}\right)^n.
        \end{array}
      \end{equation}
    \end{rem}
\subsection{Modified Bargmann-Dirichlet spaces of order $m$ on $\mathbb C$}
    Our aim here is to prove an asymptotic result  concerning the limit of the kernel $\mathbb K_{\theta R^2,\beta}^{m,R}$ when $R$ goes to infinity for a $\theta>0$. To this aim we start by recalling the modified Bargmann-Fock space  $\left(\mathcal B^2(\mathbb C,\nu_{\theta,\beta}),\langle .,.\rangle_{\nu_{\theta,\beta}}\right)$ of holomorphic functions on $\mathbb C^*$ that are square integrable with respect to the probability measure $\nu_{\theta,\beta}$ with inner product
    $$\langle f,g\rangle_{\nu_{\theta,\beta}}=\int_{\mathbb C}f(z)\overline{g(z)}d\nu_{\theta,\beta}(z)=\frac{\theta^{\beta+1}}{\Gamma(\beta+1)}\int_{\mathbb C}f(z)\overline{g(z)}|z|^{2\beta}e^{-\theta|z|^2}dA(z)$$

    Similarly to the previous case, we can define the modified Bargmann-Dirichlet space $\mathcal B^2_m(\mathbb C,\nu_{\theta,\beta})$  of order $m\in\mathbb N$ as the set of holomorphic functions $f$ on $\mathbb C^*$ such that $f^{(m)}\in\mathcal B^2(\mathbb C,\nu_{\theta,\beta})$ with the norm
    $$\|f\|_{m,\nu_{\theta,\beta}}^2:=\|f_1\|_{\nu_{\theta,\beta}}^2+\left\|f_2^{(m)}\right\|_{\nu_{\theta,\beta}}^2$$
    for $f(z)=f_1(z)+f_2(z)$ as in \eqref{eq2.1}. This norm is associated with the  inner product defined by $$\langle f,g\rangle_{m,\nu_{\theta,\beta}}=\langle f_1,g_1\rangle_{\nu_{\theta,\beta}}+\left\langle f_2^{(m)},g_2^{(m)}\right\rangle_{\nu_{\theta,\beta}}$$ for  every $f,g\in\mathcal B^2_m(\mathbb C,\nu_{\theta,\beta})$. \\
    Again, if $\varphi_n(z)=z^n$ then $\varphi_n\in\mathcal B^2_m(\mathbb C,\nu_{\theta,\beta})$ for every $n\geq \min(m-p,0)$ and
    $$\|\varphi_n\|_{m,\nu_{\theta,\beta}}^2=
    \left\{\begin{array}{lcl}
            \ds\frac{\Gamma(n+\beta+1)}{\theta^n\Gamma(\beta+1)}&if & 0\leq n\leq m-1\\
            \ds\left(\frac{n!}{(n-m)!}\right)^2\frac{\Gamma(n-m+\beta+1)}{\theta^{n-m}\Gamma(\beta+1)}&if & n\geq m\\
            \ds\left(\frac{(m-n-1)!}{(-n-1)!}\right)^2\frac{\Gamma(n-m+\beta+1)}{\theta^{n-m}\Gamma(\beta+1)}&if & m-p\leq n\leq -1.
           \end{array}\right.
    $$  For every $n\geq \min(m-p,0)$, we set $\mathfrak e_{n,\theta,\beta}^m:=z^n/\|\varphi_n\|_{m,\nu_{\theta,\beta}}$.\\

    \begin{prop}\label{prop1}
    The space $\mathcal B^2_m(\mathbb C,\nu_{\theta,\beta})$ is a Hilbert space and the sequence $(\mathfrak e_{n,\theta,\beta}^m)_{n\geq \min(m-p,0)}$ is a Hilbert basis. Hence the reproducing kernel is given by $\ds \mathbb L_{\theta,\beta}^m(z,w)=\mathcal L_{\theta,\beta}^m(z\overline{w})$ where
    $$\mathcal L_{\theta,\beta}^m(\xi)=\sum_{n=0}^{m-1}\frac{(\theta \xi)^n}{(\beta+1)_n} +\frac{\xi^m}{(m!)^2}\ _3F_3\left(\left.
\begin{array}{ccc}
1,&1,&1\\
m+1,&m+1,& \beta+1
\end{array}\right|\theta\xi\right)+B_{\theta,\beta}^m(\xi)
    $$
    with
    $$B_{\theta,\beta}^m(\xi)=\left\{\begin{array}{lcl}
        \ds \frac{1}{\theta^m}\sum_{k=0}^{p-m-1}\left(\frac{k!}{(m+k)!}\right)^2\frac{\Gamma(\beta+1)}{\Gamma(\beta-m-k)}\frac{1}{(\theta \xi)^{k+1}}&if&p>m\\
        0&if&p\leq m
      \end{array}\right.
    $$
    \end{prop}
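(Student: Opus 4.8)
The plan is to mirror, step by step, the proof of Theorem~\ref{th1}, replacing the Beta integrals attached to $\mu^R_{\alpha,\beta}$ by the Gamma integrals attached to the Gaussian weight $\nu_{\theta,\beta}$. First I would check that $\mathcal{B}^2_m(\mathbb{C},\nu_{\theta,\beta})$ is a Hilbert space. Exactly as in the Bergman case, a function $f\in\mathcal{B}^2_m(\mathbb{C},\nu_{\theta,\beta})$ is holomorphic on $\mathbb{C}^*$ and, because $f^{(m)}\in\mathcal{B}^2(\mathbb{C},\nu_{\theta,\beta})$, the origin is a removable singularity (when $p\le m$) or a pole of order at most $p-m$ (when $p>m$); hence $f$ has a Laurent expansion $f(z)=\sum_{n\ge\min(0,m-p)}a_nz^n$. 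Comparing $\|\varphi_n\|^2_{\nu_{\theta,\beta}}$ with $\|\varphi_n\|^2_{m,\nu_{\theta,\beta}}$ term by term, as in the lemma preceding Theorem~\ref{th1}, gives a continuous canonical injection $\mathcal{B}^2_m(\mathbb{C},\nu_{\theta,\beta})\hookrightarrow\mathcal{B}^2(\mathbb{C},\nu_{\theta,\beta})$; in particular the evaluations $f\mapsto f(z)$ are bounded, norm limits remain holomorphic, and the space is complete.

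Next, the orthonormality of $(\mathfrak{e}^m_{n,\theta,\beta})_{n\ge\min(m-p,0)}$ is immediate: the angular integration kills $\langle\varphi_n,\varphi_k\rangle_{m,\nu_{\theta,\beta}}$ for $n\ne k$, and the diagonal values are precisely the norms listed above, each radial integral now being the Gamma integral $\int_0^{+\infty}t^{c}e^{-\theta t}\,dt=\Gamma(c+1)/\theta^{c+1}$ in place of the Beta integral used for $\mathcal{D}^2_m$. The substantive step is to prove that this system is total, i.e. a Hilbert basis. I would do this by showing that the coefficient map $f\mapsto(a_n)_n$ is an isometry of $\mathcal{B}^2_m(\mathbb{C},\nu_{\theta,\beta})$ onto the weighted sequence space with weights $\|\varphi_n\|^2_{m,\nu_{\theta,\beta}}$: the Laurent partial sums of $f$ are orthogonal, Parseval gives $\sum_n|a_n|^2\|\varphi_n\|^2_{m,\nu_{\theta,\beta}}=\|f\|^2_{m,\nu_{\theta,\beta}}$, and the locally uniform convergence of the Laurent series on $\mathbb{C}^*$ identifies the norm limit of the partial sums with $f$.

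Granting the basis property, the reproducing kernel follows from \cite{Kr} as
$$\mathbb{L}^m_{\theta,\beta}(z,w)=\sum_{n\ge\min(m-p,0)}\mathfrak{e}^m_{n,\theta,\beta}(z)\,\overline{\mathfrak{e}^m_{n,\theta,\beta}(w)}=\mathcal{L}^m_{\theta,\beta}(z\overline{w}),\qquad\mathcal{L}^m_{\theta,\beta}(\xi)=\sum_{n\ge\min(m-p,0)}\frac{\xi^n}{\|\varphi_n\|^2_{m,\nu_{\theta,\beta}}},$$
and I would split this series into the three ranges of Lemma~\ref{lem1}. The block $0\le n\le m-1$ contributes $\sum_{n=0}^{m-1}(\theta\xi)^n/(\beta+1)_n$ because $\Gamma(n+\beta+1)/\Gamma(\beta+1)=(\beta+1)_n$. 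Reindexing the tail $n\ge m$ by $s=n-m$ and using $(s+m)!=m!\,(m+1)_s$ together with $(s!)^2=(1)_s^3/s!$ recasts it as $\frac{\xi^m}{(m!)^2}$ times the ${}_3F_3$ with upper parameters $1,1,1$ and lower parameters $m+1,m+1,\beta+1$; observe that this is the ${}_4F_3$ of Theorem~\ref{th1} with the upper parameter $\alpha+\beta+2$ deleted, reflecting that the factor $(\alpha+\beta+2)_s$ produced by the Beta weight degenerates into the plain factor $\theta^s$ produced by the Gaussian. Finally, reindexing the negative range $m-p\le n\le-1$ by $k=-n-1$ produces exactly $B^m_{\theta,\beta}(\xi)$ when $p>m$, and this range is empty when $p\le m$.

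The only step that is not a formal manipulation is the totality of the monomial system (the Hilbert basis claim): it is where one must control the convergence of the Laurent series in the weighted norm, in particular at the pole $z=0$ when $p>m$. The kernel identity itself is then just the termwise hypergeometric bookkeeping sketched above.
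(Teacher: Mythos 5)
Your proposal is correct and follows the same route the paper takes: the norms $\|\varphi_n\|^2_{m,\nu_{\theta,\beta}}$ computed just before the proposition play the role of Lemma~\ref{lem1}, the Hilbert-space and basis claims are handled exactly as in the lemmas preceding Theorem~\ref{th1}, and the kernel is obtained by summing $\xi^n/\|\varphi_n\|^2_{m,\nu_{\theta,\beta}}$ over the three index ranges, with your reindexings $s=n-m$ and $k=-n-1$ reproducing the ${}_3F_3$ term and $B^m_{\theta,\beta}$ precisely as the paper's proof of Theorem~\ref{th1} does for the ${}_4F_3$ and $A^{m,R}_{\alpha,\beta}$. No gaps; the identification of the ${}_3F_3$ as the degeneration of the ${}_4F_3$ (the factor $(\alpha+\beta+2)_s/R^{2s}\to\theta^s$) is exactly the mechanism behind the paper's subsequent corollary.
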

    \begin{cor}
        For every $\xi\in\mathbb C^*$ we have
        $$\lim_{R\to+\infty}\mathcal K_{\theta R^2,\beta}^{m,R}(\xi)=\mathcal L_{\theta,\beta}^m(\xi).$$
    \end{cor}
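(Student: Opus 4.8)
The Corollary states that for every $\xi \in \mathbb{C}^*$:
$$\lim_{R\to+\infty}\mathcal K_{\theta R^2,\beta}^{m,R}(\xi)=\mathcal L_{\theta,\beta}^m(\xi).$$

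Let me understand the structure. We have from Theorem 1:
$$\mathcal{K}_{\alpha,\beta}^{m,R}(\xi)=\sum_{n=0}^{m-1}\frac{(\alpha+\beta+2)_n}{(\beta+1)_n}\left(\frac{\xi}{R^2}\right)^n +\frac{\xi^m}{(m!)^2}\ _4F_3\left(\begin{array}{c}1,1,1,\alpha+\beta+2\\m+1,m+1,\beta+1\end{array}\Big|\frac{\xi}{R^2}\right)+A_{\alpha,\beta}^{m,R}(\xi)$$

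And from Proposition 1:
$$\mathcal{L}_{\theta,\beta}^m(\xi)=\sum_{n=0}^{m-1}\frac{(\theta\xi)^n}{(\beta+1)_n} +\frac{\xi^m}{(m!)^2}\ _3F_3\left(\begin{array}{ccc}1,1,1\\m+1,m+1,\beta+1\end{array}\Big|\theta\xi\right)+B_{\theta,\beta}^m(\xi)$$

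**My strategy:**

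Set $\alpha = \theta R^2$ and take $R \to \infty$. I need to match the three components term by term.

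**Term 1 (polynomial part):** For $0 \le n \le m-1$:
$$\frac{(\alpha+\beta+2)_n}{(\beta+1)_n}\left(\frac{\xi}{R^2}\right)^n = \frac{(\theta R^2+\beta+2)_n}{(\beta+1)_n}\cdot\frac{\xi^n}{R^{2n}}$$

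As $R\to\infty$, $(\theta R^2+\beta+2)_n \sim (\theta R^2)^n$, so this $\to \frac{(\theta\xi)^n}{(\beta+1)_n}$. ✓

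**Term 2 (main hypergeometric):** In $_4F_3$, the general coefficient contains $\frac{(\alpha+\beta+2)_s}{s!}\left(\frac{\xi}{R^2}\right)^s = \frac{(\theta R^2+\beta+2)_s}{s!}\cdot\frac{\xi^s}{R^{2s}}$. As $R\to\infty$, $(\theta R^2+\beta+2)_s \sim (\theta R^2)^s$, giving $\frac{(\theta\xi)^s}{s!}$. So the $_4F_3$ converges to $_3F_3$ (with argument $\theta\xi$) since the parameter $\alpha+\beta+2$ in numerator disappears, converting $_4F_3 \to {}_3F_3$. ✓

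**Term 3 ($A$ vs $B$):** For $p > m$, in $A_{\alpha,\beta}^{m,R}$ each term contains:
$$R^{2m}\cdot\frac{\Gamma(\alpha+\beta+1-m-k)}{\Gamma(\beta-m-k)}\cdot\frac{\Gamma(\beta+1)}{\Gamma(\alpha+\beta+2)}\cdot\left(\frac{R^2}{\xi}\right)^{k+1}$$

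Now $\frac{\Gamma(\alpha+\beta+1-m-k)}{\Gamma(\alpha+\beta+2)} \sim (\alpha)^{-(m+k+1)} = (\theta R^2)^{-(m+k+1)}$ as $\alpha\to\infty$. So:
$$R^{2m}\cdot(\theta R^2)^{-(m+k+1)}\cdot\frac{\Gamma(\beta+1)}{\Gamma(\beta-m-k)}\cdot R^{2(k+1)}\cdot\xi^{-(k+1)} = \frac{1}{\theta^{m+k+1}}\cdot\frac{\Gamma(\beta+1)}{\Gamma(\beta-m-k)}\cdot\xi^{-(k+1)}$$

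This matches $B_{\theta,\beta}^m$ term by term. ✓

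---

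\bigskip

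\begin{proof}
The plan is to identify the three corresponding blocks of $\mathcal K_{\theta R^2,\beta}^{m,R}$ and $\mathcal L_{\theta,\beta}^m$ and pass to the limit in each, using the elementary asymptotic $(\alpha+\gamma)_s\sim\alpha^s$ as $\alpha\to+\infty$ for fixed $\gamma$ and $s$. Throughout I set $\alpha=\theta R^2$, so that $\alpha\to+\infty$ precisely when $R\to+\infty$.

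First I treat the polynomial part. For each fixed $n$ with $0\le n\le m-1$ one has
$$\frac{(\theta R^2+\beta+2)_n}{(\beta+1)_n}\left(\frac{\xi}{R^2}\right)^n =\frac{\xi^n}{(\beta+1)_n}\cdot\frac{(\theta R^2+\beta+2)_n}{R^{2n}}\xrightarrow[R\to+\infty]{}\frac{(\theta\xi)^n}{(\beta+1)_n},$$
since the numerator $(\theta R^2+\beta+2)_n$ is asymptotic to $(\theta R^2)^n$. Summing over the finitely many indices $0\le n\le m-1$ reproduces the first sum in $\mathcal L_{\theta,\beta}^m(\xi)$.

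Next I pass to the limit inside the hypergeometric block. Writing the $_4F_3$ as $\sum_{s\ge0}c_s(R)$ with
$$c_s(R)=\frac{(s!)^3(\theta R^2+\beta+2)_s}{(\beta+1)_s\big((m+1)_s\big)^2}\frac{1}{s!}\left(\frac{\xi}{R^2}\right)^s,$$
the same asymptotic gives $(\theta R^2+\beta+2)_s(\xi/R^2)^s\to(\theta\xi)^s$ for each fixed $s$, so that $c_s(R)\to\frac{(s!)^3}{(\beta+1)_s((m+1)_s)^2}\frac{(\theta\xi)^s}{s!}$, which is exactly the $s$-th coefficient of the $_3F_3$ in $\mathcal L_{\theta,\beta}^m(\xi)$; the disappearance of the numerator parameter $\alpha+\beta+2$ is what lowers the order from $_4F_3$ to $_3F_3$. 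To justify the interchange of limit and summation I would dominate: for $R$ large, $(\theta R^2+\beta+2)_s/R^{2s}\le C\theta^s$ uniformly in $s$ for some constant $C=C(\theta,\beta)$, whence $|c_s(R)|\le C\,\frac{(s!)^2}{((m+1)_s)^2(\beta+1)_s}|\theta\xi|^s$, a summable bound independent of $R$; dominated convergence then yields termwise passage to the limit. The factor $\xi^m/(m!)^2$ is $R$-independent and carries through.

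Finally I handle the singular block (only present when $p>m$). For each fixed $k$ with $0\le k\le p-m-1$, the $k$-th summand of $A_{\theta R^2,\beta}^{m,R}(\xi)$ equals
$$R^{2m}\frac{\Gamma(\theta R^2+\beta+1-m-k)}{\Gamma(\theta R^2+\beta+2)}\frac{\Gamma(\beta+1)}{\Gamma(\beta-m-k)}\left(\frac{k!}{(m+k)!}\right)^2\left(\frac{R^2}{\xi}\right)^{k+1}.$$
By the ratio asymptotic $\Gamma(x+a)/\Gamma(x+b)\sim x^{a-b}$ as $x\to+\infty$, the gamma-quotient is asymptotic to $(\theta R^2)^{-(m+k+1)}$; collecting the powers of $R$ gives $R^{2m}\cdot R^{-2(m+k+1)}\cdot R^{2(k+1)}=R^0$, so the $R$-dependence cancels and the summand tends to
$$\frac{1}{\theta^{m+k+1}}\left(\frac{k!}{(m+k)!}\right)^2\frac{\Gamma(\beta+1)}{\Gamma(\beta-m-k)}\frac{1}{\xi^{k+1}} =\frac{1}{\theta^m}\left(\frac{k!}{(m+k)!}\right)^2\frac{\Gamma(\beta+1)}{\Gamma(\beta-m-k)}\frac{1}{(\theta\xi)^{k+1}},$$
which is precisely the $k$-th summand of $B_{\theta,\beta}^m(\xi)$. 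As this is a finite sum, no convergence issue arises. Adding the three limits gives $\mathcal L_{\theta,\beta}^m(\xi)$, completing the proof. The only genuine obstacle is the termwise passage to the limit in the infinite $_4F_3$ series, which the uniform domination above resolves.
\end{proof}
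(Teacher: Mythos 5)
Your overall route is exactly the paper's: the authors dispose of this corollary in one line by ``applying the Stirling formula to Theorem \ref{th1} and Proposition \ref{prop1}'', i.e.\ termwise Gamma-ratio asymptotics on the three blocks, which is what you carry out. Your treatment of the finite polynomial block and of the singular block $A^{m,R}_{\theta R^2,\beta}\to B^m_{\theta,\beta}$ is correct (the cancellation of the powers of $R$ and the emergence of $\theta^{-m}(\theta\xi)^{-(k+1)}$ check out), and you rightly identify that the only genuine issue is the interchange of limit and summation in the infinite $_4F_3$ block, a point the paper does not address at all.

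However, the specific bound you use to justify that interchange is false. You claim that for $R$ large, $(\theta R^2+\beta+2)_s/R^{2s}\le C\theta^s$ uniformly in $s$. Writing $(\theta R^2+\beta+2)_s/R^{2s}=\prod_{j=0}^{s-1}\bigl(\theta+\tfrac{\beta+2+j}{R^2}\bigr)$, one sees that for fixed $R$ and $s\gg\theta R^2$ the late factors behave like $j/R^2$ and the product grows super-exponentially in $s$; no bound of the form $C\theta^s$ uniform in $s$ can hold. (This is consistent with the fact that the $_4F_3$ series has radius of convergence exactly $R^2$, not $\infty$.) The repair is easy and worth stating: since $\beta+2+j>0$, each factor $\theta+\tfrac{\beta+2+j}{R^2}$ is \emph{decreasing} in $R$, so for any fixed $R_0$ with $R_0^2>|\xi|$ and all $R\ge R_0$ one has $|c_s(R)|\le \frac{(s!)^2}{((m+1)_s)^2(\beta+1)_s}\,\frac{(\theta R_0^2+\beta+2)_s}{R_0^{2s}}\,|\xi|^s$, which is summable because $|\xi|<R_0^2$; dominated convergence (Tannery's theorem) then gives the termwise passage to the limit. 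With that substitution your proof is complete and fills in precisely the detail the paper leaves implicit.
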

    \begin{proof}
        It is a simple consequence of the Stirling formula applied on Theorem \ref{th1} and Proposition \ref{prop1}.
    \end{proof}
    \begin{rem} Some particular cases:
        \begin{enumerate}
          \item If $m=0=\beta$ then $\mathcal B^2_0(\mathbb C,\nu_{\theta,0})$ is the classical weighted Bargmann-Fock space with reproducing kernel $\ds \mathbb L_{\theta,0}^0(z,w)=e^{\theta z\overline{w}}$.
          \item If $m=0$ and $\beta=\beta_0+p\neq0$ then using Equation \eqref{eq2.5}, we conclude that the reproducing kernel of the modified Bargmann-Fock space $\mathcal B^2_0(\mathbb C,\nu_{\theta,\beta})$ is
              $$\begin{array}{lcl}
                   \ds \mathbb L_{\theta,\beta}^0(z,w)&=&\ds \frac{(\beta_0+1)_p}{(\theta\overline{w}z)^p}\ _1F_1\left(\left.
                \begin{array}{c}
                        1\\
                    \beta_0+1
                \end{array}\right|\theta\overline{w}z\right)\\
                &=&\ds (\beta_0+1)_p\frac{\beta_0e^{\theta\overline{w}z}}{\left(\theta\overline{w}z)\right)^p} \sum_{n=0}^{+\infty}\frac{(-\theta\overline{w}z)^n}{n!(n+\beta_0)}.
                \end{array}
              $$
            This result was proved by \cite{Ha-Yo}, where the reproducing kernel is expressed in terms of the Laguerre polynomials.
          \item If $\beta=0$ and $m\neq0$ then $\mathcal B^2_m(\mathbb C,\nu_{\theta,0})$ is the  Bargmann-Dirichlet space introduced in \cite{EGIMS} with reproducing kernel
              $$\ds \mathbb L_{\theta,0}^m(z,w)=\sum_{n=0}^{m-1}\frac{(\theta z\overline{w})^n}{n!} +\frac{\xi^m}{(m!)^2}\ _2F_2\left(\left.
                \begin{array}{cc}
                    1,&1\\
                    m+1,&m+1
                \end{array}\right|\theta z\overline{w}\right)$$
        \end{enumerate}
    \end{rem}

\subsection{Modified Hardy-Dirichlet spaces}
In this part we begin with the modified Hardy spaces as the limit case of  the modified Bergman spaces $\left(\mathcal A^s(\mathbb D,\mu_{\alpha,\beta}),\| .\|_{s,\mu_{\alpha,\beta}}\right)$ when $\alpha$ goes to $(-1)$. Then we focus on the particular  case $s=2$ (Hilbert space) where we give its reproducing kernel. Similarly to the previous cases, one can  define the modified Hardy-Dirichlet spaces. This statement is so simple, for this reason we use the asymptotic argument to prove our results without details about topological properties of these spaces. \\
We shall start by the classical Hardy spaces $H^s(\mathbb D),\ 0<s<+\infty,$ formed by  holomorphic functions $g$ on $\mathbb D$ such that
$$\|g\|_{H^s(\mathbb D)}:=\sup_{0<r<1}M_s(r,g)<+\infty$$
where $$M_s(r,g):=\left(\frac{1}{2\pi}\int_0^{2\pi}|g(re^{i\theta})|^sd\theta\right)^{\frac{1}{s}}.$$
Due to the fact that the function $r\longmapsto M_s(r,g)$ is non-decreasing on $]0,1[$, we deduce that $$\|g\|_{H^s(\mathbb D)}=\lim_{r\to1}M_s(r,g).$$
Furthermore, the function   $g$ has a radial extension $g^*$ almost every where on the unit circle so that
$$\|g\|_{H^s(\mathbb D)}=\left(\frac{1}{2\pi}\int_0^{2\pi}|g^*(e^{i\theta})|^sd\theta\right)^{\frac{1}{s}}.$$
To introduce the modified Hardy space, for every $-1<\beta<+\infty$ and $0<s<+\infty$, we set
\begin{equation}\label{eq2.6}
\eta_{s,\beta}=\left\{\begin{array}{lcl}
                        \ds\left\lfloor\frac{2(\beta+1)}{s}\right\rfloor & if & \ds\frac{2(\beta+1)}{s}\not\in\mathbb N\\
                        \ds\frac{2(\beta+1)}{s}-1& if & \ds\frac{2(\beta+1)}{s}\in\mathbb N
                      \end{array}\right.
\end{equation}
    where $\lfloor .\rfloor$ is the integer part.
\begin{defn}
 The modified Hardy space $H_\beta^s(\mathbb D)$ is the set of all holomorphic functions $f$ on $\mathbb D^*$ such that $0$ is removable for $f$ or it is  a pole with order $\eta=\eta(f)\leq \eta_{s,\beta}$ (defined in Equation \eqref{eq2.6}) and
 $$\|f\|_{H_\beta^s(\mathbb D)}^s:=\sup_{0<r<1}\frac{r^{s\eta}}{2\pi}\int_0^{2\pi}|f(re^{i\theta})|^sd\theta<+\infty.$$
\end{defn}
\begin{rems}
\begin{enumerate}
  \item The classical Hardy space  $H^s(\mathbb D)$ is exactly $H_0^s(\mathbb D)$.
  \item If $f$ is a given function then $f\in H_\beta^s(\mathbb D)$ if and only if  $\widetilde{f}(z):=z^{\eta(f)}f(z)\in H^s(\mathbb D)$  and $\|f\|_{H_\beta^s(\mathbb D)}=\|\widetilde{f}\|_{H^s(\mathbb D)}$. Thus
      $$\|f\|_{H_\beta^s(\mathbb D)}=\lim_{r\to1^-}M_s(r,\widetilde{f}) =\lim_{r\to1^-}M_s(r,f).$$
      We claim here that the function  $r\longmapsto M_s(r,f)$ can be non-monotone on $]0,1[$ (essentially when $0$ is a pole for $f$ with positive order).
  \item For every $f\in H_\beta^s(\mathbb D)$, the radial extension of $f$ exists and is given by
    $$f^*(e^{i\theta}):=\lim_{r\to1}f(re^{i\theta})=e^{-i\theta\eta(f)}\left(\widetilde{f}\right)^*(e^{i\theta}).$$
    Moreover we have
    $$\|f\|_{H_\beta^s(\mathbb D)}=\left(\frac{1}{2\pi}\int_0^{2\pi}|f^*(e^{i\theta})|^sd\theta\right)^{\frac{1}{s}}.$$

\end{enumerate}
\end{rems}
The construction of the modified Hardy space $H_\beta^s(\mathbb D)$ is inspired from the modified Bergman space $\mathcal A^s(\mathbb D,\mu_{\alpha,\beta})$ for $\alpha$ close to $-1$ as we will see in the following lemma. Indeed we know that if $f\in\mathcal A^s(\mathbb D,\mu_{\alpha,\beta})$ then $0$ can't be an essential singularity for $f$, hence either $0$ is removable for $f$ (so $f$ is holomorphic on $\mathbb D$) or $0$ is a pole for $f$ with order $\eta(f)\leq \eta_{s,\beta}$.
\begin{lem}
If $f\in H_\beta^s(\mathbb D)$ then  $f\in\mathcal A^s(\mathbb D,\mu_{\alpha,\beta})$ for every $\alpha>-1$. Moreover, we have $$\lim_{\alpha\to(-1)^+}\|f\|_{\mathcal A^s(\mathbb D,\mu_{\alpha,\beta})}=\|f\|_{H_\beta^s(\mathbb D)}.$$
\end{lem}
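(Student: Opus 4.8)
The plan is to reduce everything to a single radial integral against the Beta weight and then run a concentration argument as $\alpha\to(-1)^+$. First I would pass to polar coordinates and substitute $t=r^2$ to rewrite, for $g(t):=M_s(\sqrt t,f)^s$,
$$\|f\|_{\mathcal A^s(\mathbb D,\mu_{\alpha,\beta})}^s=\frac{1}{\mathscr B(\alpha+1,\beta+1)}\int_0^1 g(t)\,t^\beta(1-t)^\alpha\,dt=\int_0^1 g(t)\,d\lambda_\alpha(t),$$
where $d\lambda_\alpha(t)=\mathscr B(\alpha+1,\beta+1)^{-1}t^\beta(1-t)^\alpha\,dt$ is precisely the $\mathrm{Beta}(\beta+1,\alpha+1)$ probability density on $[0,1]$. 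This recasts $\|f\|_{\mathcal A^s}^s$ as the $\lambda_\alpha$-average of $g$, which is the natural object since $\lambda_\alpha$ concentrates at $t=1$ as $\alpha\to(-1)^+$.

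The second step is to record the properties of $g$ inherited from $\widetilde f:=z^{\eta(f)}f\in H^s(\mathbb D)$. Writing $h(t):=M_s(\sqrt t,\widetilde f)^s$, the classical monotonicity of $r\mapsto M_s(r,\widetilde f)$ shows that $h$ is non-decreasing with $h(t)\le L:=\|f\|_{H_\beta^s(\mathbb D)}^s$ and $h(t)\to L$ as $t\to1^-$; moreover $M_s(r,f)=r^{-\eta}M_s(r,\widetilde f)$ gives $g(t)=t^{-\eta s/2}h(t)$, so in particular $g(t)\le L\,t^{-\eta s/2}$ and $g(t)\to L$ as $t\to1^-$. The bound $g(t)t^\beta\le L\,t^{\beta-\eta s/2}$ together with the defining constraint $\eta\le\eta_{s,\beta}<2(\beta+1)/s$ makes the integrand integrable near $t=0$ (and $\alpha>-1$ handles $t=1$), which already yields $\|f\|_{\mathcal A^s(\mathbb D,\mu_{\alpha,\beta})}<+\infty$, i.e. $f\in\mathcal A^s(\mathbb D,\mu_{\alpha,\beta})$, proving the first assertion.

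For the limit I would fix $\varepsilon>0$, choose $\delta\in(0,1)$ with $|g(t)-L|<\varepsilon$ on $(1-\delta,1)$, and split $\int_0^1(g-L)\,d\lambda_\alpha$ at $1-\delta$. On $(1-\delta,1)$ the integral is bounded by $\varepsilon\,\lambda_\alpha((1-\delta,1))\le\varepsilon$. On $[0,1-\delta]$ I would use $(1-t)^\alpha\le\delta^\alpha$ (valid since $\alpha<0$) and the uniform bound $\int_0^1(g(t)+L)t^\beta\,dt\le C<+\infty$ from the previous step to estimate the remaining piece by $C\delta^\alpha/\mathscr B(\alpha+1,\beta+1)$. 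The decisive point is that $\mathscr B(\alpha+1,\beta+1)=\Gamma(\alpha+1)\Gamma(\beta+1)/\Gamma(\alpha+\beta+2)\to+\infty$ as $\alpha\to(-1)^+$ while $\delta^\alpha\to\delta^{-1}$ stays bounded, so this piece tends to $0$. Letting $\alpha\to(-1)^+$ and then $\varepsilon\to0$ gives $\lim_{\alpha\to(-1)^+}\|f\|_{\mathcal A^s}^s=L$, and taking $s$-th roots yields the claimed limit for the norms.

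I expect the main obstacle to be the possible pole of $f$ at the origin: there $g$ is unbounded and $r\mapsto M_s(r,f)$ is non-monotone, so one cannot simply invoke monotone or dominated convergence with a fixed dominating function. The care lies in showing that the $\lambda_\alpha$-mass escaping from the boundary annihilates the (integrable) singularity at $t=0$, and this is exactly what the blow-up of the normalizing constant $\mathscr B(\alpha+1,\beta+1)$ delivers, the pole-order bound $\eta\le\eta_{s,\beta}$ guaranteeing the requisite integrability in the first place.
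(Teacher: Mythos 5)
Your argument is correct, and the first half (polar coordinates, the bound $g(t)t^\beta\le L\,t^{\beta-s\eta/2}$ with $\beta-s\eta/2>-1$ coming from $\eta\le\eta_{s,\beta}<2(\beta+1)/s$, and the resulting membership in $\mathcal A^s(\mathbb D,\mu_{\alpha,\beta})$ together with the upper estimate) coincides with the paper's. Where you genuinely diverge is the limit itself. The paper proves the two inequalities separately: the upper bound comes from the ratio $\mathscr B(\alpha+1,\beta-\frac{s\eta}{2}+1)/\mathscr B(\alpha+1,\beta+1)\to1$, and the lower bound from cutting the radial integral at $r_0$ and showing $\mathscr B_{1-r_0^2}(\alpha+1,\cdot)/\mathscr B(\alpha+1,\beta+1)\to1$ via the ${}_2F_1$ representation \eqref{eq2.8} of the incomplete Beta function. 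You instead treat $\lambda_\alpha$ as a probability measure, split $\int_0^1(g-L)\,d\lambda_\alpha$ two-sidedly at $1-\delta$, absorb the boundary piece into $\varepsilon$ using total mass $1$, and kill the piece on $[0,1-\delta]$ by the elementary observation that $(1-t)^\alpha\le\delta^\alpha$ there while $\mathscr B(\alpha+1,\beta+1)\sim\Gamma(\beta+1)/\bigl((\alpha+1)\Gamma(\beta+2)\bigr)\to+\infty$. This buys you a single unified estimate that never invokes the incomplete Beta function or any hypergeometric identity, and it isolates the real mechanism (concentration of the Beta$(\beta+1,\alpha+1)$ law at $t=1$ against an integrable, possibly unbounded $g$); the paper's route is less streamlined but yields explicit quantitative control of the approach to the limit through ratios of (incomplete) Beta functions. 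Both proofs handle the delicate point you flag --- the non-monotonicity of $r\mapsto M_s(r,f)$ when $0$ is a pole --- by passing to $\widetilde f$, exactly as the paper does.
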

If $\beta=0$ then we cover the result of \cite{Zh}.
\begin{proof}
Let $f\in H_\beta^s(\mathbb D)$. The inclusion  $f\in\mathcal A^s(\mathbb D,\mu_{\alpha,\beta})$ is deduced from the following inequality:
\begin{equation}\label{eq2.7}
    \begin{array}{l}
       \|f\|_{\mathcal A^s(\mathbb D,\mu_{\alpha,\beta})}^s=\ds\int_{\mathbb D}|f(z)|^sd\mu_{\alpha,\beta}(z)\\
       =\ds\frac{1}{\mathscr B(\alpha+1,\beta+1)}\int_{\mathbb D}|f(z)|^s|z|^{2\beta}(1-|z|^2)^\alpha dA(z)\\
       =\ds\frac{2}{\mathscr B(\alpha+1,\beta+1)}\int_0^1r^{2\beta+1}(1-r^2)^\alpha \left(\frac{1}{2\pi}\int_0^{2\pi}|f(re^{i\theta})|^sd\theta\right) dr \\
       =\ds\frac{2}{\mathscr B(\alpha+1,\beta+1)}\int_0^1r^{2\beta+1-s\eta}(1-r^2)^\alpha M_s(r,\widetilde{f})^s dr \\
       \leq \ds \frac{\mathscr B(\alpha+1,\beta-\frac{s\eta}{2}+1)}{\mathscr B(\alpha+1,\beta+1)}\|f\|_{H_\beta^s(\mathbb D)}^s
    \end{array}
\end{equation}
If we tend $\alpha$ to $-1$ in Equation \eqref{eq2.7}, we obtain $$\lim_{\alpha\to(-1)^+}\|f\|_{\mathcal A^s(\mathbb D,\mu_{\alpha,\beta})}\leq\|f\|_{H_\beta^s(\mathbb D)}.$$
To prove the converse inequality, we need the incomplete Beta function $\mathscr B_x(a,b)$ defined by
$$\mathscr B_x(a,b):=\int_0^xt^{a-1}(1-t)^{b-1}dt,\quad a,b>0.$$
Then one has
\begin{equation}\label{eq2.8}
\mathscr B_x(a,b)=\frac{x^a(1-x)^b}{a}\ _2F_1\left(\left.
\begin{array}{cc}
1,&a+b\\
& a+1
\end{array}\right|x\right)
\end{equation}

For $\varepsilon>0$, there exists $r_0\in]0,1[$ such that for every $r\in[r_0,1[$ we have
$\ds M_s(r,\widetilde{f})^s\geq \|f\|_{H_\beta^s(\mathbb D)}^s-\varepsilon$. It follows that
\begin{equation}\label{eq2.9}
    \begin{array}{lcl}
       \ds\|f\|_{\mathcal A^s(\mathbb D,\mu_{\alpha,\beta})}^s
       &\geq&\ds\frac{2}{\mathscr B(\alpha+1,\beta+1)}\int_{r_0}^1r^{2\beta+1-s\eta}(1-r^2)^\alpha M_s(r,\widetilde{f})^s dr \\
       &\geq &\ds \frac{2\left(\|f\|_{H_\beta^s(\mathbb D)}^s-\varepsilon\right)}{\mathscr B(\alpha+1,\beta+1)}\int_{r_0}^1r^{2\beta+1-s\eta}(1-r^2)^\alpha dr\\
       &\geq &\ds \frac{\|f\|_{H_\beta^s(\mathbb D)}^s-\varepsilon}{\mathscr B(\alpha+1,\beta+1)}\int_0^{1-r_0^2}t^\alpha(1-t)^{\beta+1-\frac{s\eta}{2}} dt\\
       &\geq &\ds \frac{\mathscr B_{1-r_0^2}(\alpha+1,\beta+1-\frac{s\eta}{2})}{\mathscr B(\alpha+1,\beta+1)}\left(\|f\|_{H_\beta^s(\mathbb D)}^s-\varepsilon\right)
    \end{array}
\end{equation}
Using Equality \eqref{eq2.8}, we obtain
$$\begin{array}{l}
     \ds\lim_{\alpha\to(-1)^+}\frac{\mathscr B_{1-r_0^2}(\alpha+1,\beta+1-\frac{s\eta}{2})}{\mathscr B(\alpha+1,\beta+1)}\\
     =\ds \lim_{\alpha\to(-1)^+}\frac{(1-r_0^2)^{\alpha+1}r_0^{2\beta+2-s\eta}}{(\alpha+1)\mathscr B(\alpha+1,\beta+1)}\ _2F_1\left(\left.
\begin{array}{cc}
1,&\alpha+\beta+2-\frac{s\eta}{2}\\
& \alpha+2
\end{array}\right|1-r_0^2\right)\\
=\ds r_0^{2\beta+2-s\eta}\frac{1}{\left(1-(1-r_0^2)\right)^{\beta+1-\frac{s\eta}{2}}}=1.
  \end{array}
$$
Thanks to Inequality \eqref{eq2.9}, we obtain $$ \lim_{\alpha\to(-1)^+}\|f\|_{\mathcal A^s(\mathbb D,\mu_{\alpha,\beta})}^s\geq \|f\|_{H_\beta^s(\mathbb D)}^s-\varepsilon.$$
Since $\varepsilon$ is chosen arbitrary, we conclude that $$ \lim_{\alpha\to(-1)^+}\|f\|_{\mathcal A^s(\mathbb D,\mu_{\alpha,\beta})}^s\geq \|f\|_{H_\beta^s(\mathbb D)}^s$$ and the proof is achieved.
\end{proof}
Now we restrict our study to the case $s=2$ and $\beta=\beta_0+p$ as above (that gives $\eta_{2,\beta}=p$). Thus the modified Hardy space $H_\beta^2(\mathbb D)$ is a Hilbert space where  the corresponding norm
$\|f\|_{H_\beta^2(\mathbb D)}=\sqrt{\langle f,f\rangle_{H_\beta^2}}$ is produced by the inner product
$$\langle f,g\rangle_{H_\beta^2}=\frac{1}{2\pi}\int_0^{2\pi}f^*(e^{i\theta})\overline{g^*(e^{i\theta})}d\theta,\quad \forall\; f,g\in H_\beta^2(\mathbb D).$$
Similarly as in the previous cases, we can define the modified Hardy-Dirichlet space $H_{m,\beta}^2(\mathbb D)$  of order $m\in\mathbb N$ as the set of holomorphic functions $f$ on $\mathbb C^*$ such that $f^{(m)}\in H_\beta^2(\mathbb D)$ with the norm
    $$\|f\|_{H_{m,\beta}^2}^2:=\|f_1\|_{H_\beta^2}^2+\left\|f_2^{(m)}\right\|_{H_\beta^2}^2$$
    for $f(z)=f_1(z)+f_2(z)$ as in \eqref{eq2.1}. This norm is associated with the  inner product defined by $$\langle f,g\rangle_{H_{m,\beta}^2}=\langle f_1,g_1\rangle_{H_\beta^2}+\left\langle f_2^{(m)},g_2^{(m)}\right\rangle_{H_\beta^2}$$ for  every $f,g\in H_{m,\beta}^2(\mathbb D) $.

    \begin{theo}\label{th2}
    The reproducing kernel of the Hilbert space $H^2_{m,\beta}(\mathbb D)$ is given by $\ds \mathbb H_{m,\beta}(z,w)=\mathcal H_{m,\beta}(z\overline{w})$ where
    $$\begin{array}{lcl}
         \mathcal H_{m,\beta}(\xi)&=&\ds \lim_{\alpha\to-1}\mathcal K_{\alpha,\beta}^{m,1}(\xi)\\
         &=&\ds\frac{1-\xi^m}{1-\xi} +\frac{\xi^m}{(m!)^2}\ _3F_2\left(\left.
\begin{array}{c}
1,\quad 1,\quad 1\\
m+1,m+1
\end{array}\right|\xi\right)+C_{m,\beta}(\xi)
      \end{array}
    $$
    with
    $$C_{m,\beta}(\xi)=\left\{\begin{array}{lcl}
        \ds \sum_{k=0}^{p-m-1} \left(\frac{k!}{(m+k)!}\right)^2\frac{1}{\xi^{k+1}}&if& p>m\\
        0& if& p\leq m
      \end{array}\right.
    $$
    \end{theo}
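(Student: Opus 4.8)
The plan is to obtain the kernel as the termwise limit, as $\alpha\to(-1)^+$, of the Bergman--Dirichlet kernel $\mathcal K^{m,1}_{\alpha,\beta}$ of Theorem \ref{th1}, using the norm-convergence lemma proved just above. First I would record that each monomial $\varphi_n(z)=z^n$ with $n\geq\min(m-p,0)$ belongs to $H^2_{m,\beta}(\mathbb D)$ and that these monomials stay pairwise orthogonal there, by the same angular-integration argument as in Lemma \ref{lem1}. For a single monomial the Dirichlet norm collapses to either $\|\varphi_n\|_{H^2_\beta}$ (when $0\le n\le m-1$) or $\|\varphi_n^{(m)}\|_{H^2_\beta}$ (otherwise), and in the latter range $\varphi_n^{(m)}$ is a multiple of $z^{n-m}$ with a pole of order at most $p=\eta_{2,\beta}$, hence it does lie in $H^2_\beta(\mathbb D)$ and the preceding lemma applies to it. Therefore
$$\|\varphi_n\|^2_{H^2_{m,\beta}}=\lim_{\alpha\to(-1)^+}\|\varphi_n\|^2_{m,\mu_{\alpha,\beta}}=\left.\left(\varepsilon^{m,1}_{n,\alpha,\beta}\right)^{-2}\right|_{\alpha=-1},$$
and evaluating the three cases of Lemma \ref{lem1} at $R=1$, $\alpha=-1$ gives $\|\varphi_n\|^2_{H^2_{m,\beta}}=1$ for $0\le n\le m-1$, the value $\left(n!/(n-m)!\right)^2$ for $n\ge m$, and $\left((m-n-1)!/(-n-1)!\right)^2$ for $m-p\le n\le-1$.

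Next I would assemble the reproducing kernel from the resulting orthonormal basis via the same formula (from \cite{Kr}) used in the proof of Theorem \ref{th1}, so that
$$\mathbb H_{m,\beta}(z,w)=\sum_{n\geq\min(m-p,0)}\frac{(z\overline w)^n}{\|\varphi_n\|^2_{H^2_{m,\beta}}}=\mathcal H_{m,\beta}(z\overline w),$$
and split the sum into the three index blocks. The block $0\le n\le m-1$ is the geometric sum $\sum_{n=0}^{m-1}\xi^n=(1-\xi^m)/(1-\xi)$; the block $n\ge m$, after the substitution $n=s+m$ together with $s!=(1)_s$ and $(s+m)!=m!\,(m+1)_s$ exactly as in the proof of Theorem \ref{th1}, reindexes to $\frac{\xi^m}{(m!)^2}$ multiplied by the hypergeometric $_3F_2$ of the statement; and, when $p>m$, the block $m-p\le n\le-1$ becomes, under the substitution $k=-n-1$, the finite Laurent tail $\sum_{k=0}^{p-m-1}(k!/(m+k)!)^2\,\xi^{-(k+1)}=C_{m,\beta}(\xi)$.

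Finally, to verify the first displayed equality $\mathcal H_{m,\beta}=\lim_{\alpha\to-1}\mathcal K^{m,1}_{\alpha,\beta}$, I would pass to the limit directly in the closed form of Theorem \ref{th1} at $R=1$: the coefficients $(\alpha+\beta+2)_n/(\beta+1)_n\to1$ turn the leading polynomial into $(1-\xi^m)/(1-\xi)$; in the $_4F_3$ the upper parameter $\alpha+\beta+2\to\beta+1$ cancels the identical lower parameter, degenerating it to the same $_3F_2$; and in $A^{m,1}_{\alpha,\beta}$ each Gamma ratio $\Gamma(\alpha+\beta+1-m-k)/\Gamma(\beta-m-k)$ and $\Gamma(\beta+1)/\Gamma(\alpha+\beta+2)$ tends to $1$, the arguments $\beta-m-k\ge\beta_0+1>0$ keeping us away from the poles of $\Gamma$, so $A^{m,1}_{\alpha,\beta}\to C_{m,\beta}$.

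The only genuinely delicate point is the interchange of the limit $\alpha\to-1$ with the infinite summation encoded in the $_4F_3$, that is, justifying that these termwise limits really sum to the limit of $\mathcal K^{m,1}_{\alpha,\beta}$. I expect to dispose of it by noting that for $|\xi|<1$ the general term of that series is dominated, uniformly for $\alpha\in(-1,0]$, by a convergent series (using $(\alpha+\beta+2)_s/(\beta+1)_s\le(\beta+2)_s/(\beta+1)_s$ and $(s!/(s+m)!)^2\le C/(s+1)^{2m}$), so that dominated convergence, equivalently locally uniform convergence of the hypergeometric series in $\alpha$, applies; the two finite blocks require no such argument. Combined with the direct computation of the second paragraph, this establishes both equalities.
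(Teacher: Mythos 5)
Your proposal is correct, and it is in fact more complete than what the paper offers: the paper deliberately gives no separate proof of Theorem \ref{th2}, stating only that it ``uses the asymptotic argument'' and folding the whole justification into the first displayed equality $\mathcal H_{m,\beta}(\xi)=\lim_{\alpha\to-1}\mathcal K^{m,1}_{\alpha,\beta}(\xi)$, i.e.\ into the assertion that the reproducing kernel of the limit space is the limit of the reproducing kernels. You do two things: your first two paragraphs construct the kernel of $H^2_{m,\beta}(\mathbb D)$ intrinsically, computing $\|\varphi_n\|^2_{H^2_{m,\beta}}$ in the three index ranges (the values $1$, $(n!/(n-m)!)^2$ and $((m-n-1)!/(-n-1)!)^2$ are right, and the reindexings $n=s+m$ and $k=-n-1$ reproduce the $_3F_2$ block and $C_{m,\beta}$ exactly); your last two paragraphs then carry out the paper's intended termwise passage $\alpha\to-1$ in Theorem \ref{th1}, including the domination argument for the $_4F_3$ series and the check that $\beta-m-k\ge\beta_0+1>0$ keeps the Gamma ratios away from poles — details the paper omits entirely. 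The direct construction buys you independence from the delicate ``limit of kernels equals kernel of limit'' step, while the asymptotic computation recovers the paper's stated first equality; together they are stronger than either alone. The only point left at the same level of informality as the paper's own treatment of Theorem \ref{th1} is the completeness of the orthogonal system $(\varphi_n)_{n\ge\min(m-p,0)}$ in $H^2_{m,\beta}(\mathbb D)$ (and the continuity of point evaluations there), which you assume when you ``assemble the reproducing kernel from the resulting orthonormal basis''; a one-line remark that any $f\in H^2_{m,\beta}(\mathbb D)$ is determined by its Laurent coefficients and that the Parseval identity holds for the Hardy norm of $\widetilde f$ would close this.
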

    As a particular case, if $m=0$ we conclude that the reproducing kernel of the Hilbert space $H^2_{\beta}(\mathbb D)$ is given by $\mathbb H_{\beta}(z,w)=\mathcal H_{\beta}(z\overline{w})$ where
    $$\mathcal H_{\beta}(\xi)=\frac{1}{\xi^p(1-\xi)}.$$
\section{Segal-Bargmann transforms}
    The Segal-Bargmann transform is an isometry invertible transform between two separable Hilbert spaces. This type of transforms was studied in many papers. To introduce this transform, we restrict our study to the case of two separable complex functional Hilbert spaces $(\mathscr H_X,d\mu_X)$ and $(\mathscr H_Y,d\mu_Y)$  on two sets $X,\ Y$ with corresponding orthonormal basis $(e_n^X)_n$ and $(e_n^Y)_n$ with respect to the inner products
    $$\langle f,g\rangle_X=\int_X f(x)\overline{g}(x)d\mu_X(x)\quad \text{and} \quad \langle h,k\rangle_Y=\int_Y h(y)\overline{k}(y)d\mu_Y(y)$$
    respectively. The Segal-Bargmann (S-B for short) transform $\mathcal T$ that transforms isometrically $(\mathscr H_X,d\mu_X)$ onto $(\mathscr H_Y,d\mu_Y)$ is given by $$\mathcal T \varphi(y)=\int_X T(y,x)\varphi(x)d\mu_X(x)$$
    for every $\varphi\in \mathscr H_X$ where the kernel $T$ is defined by
    $$T(y,x)=\sum_{n=0}^{+\infty}e_n^Y(y)\overline{e_n^X(x)}.$$
    Indeed, for every $m\geq 0$, one has
    $$\mathcal T e_m^X(y)=\sum_{n=0}^{+\infty}e_n^Y(y)\langle e_m^X,e_n^X(x)\rangle_X =e_m^Y(y)$$
    It follows that for every $\varphi\in \mathscr H_X,\ \varphi(x)=\sum_{n=0}^{+\infty}a_ne_n^X(x) $ we have
    $$\mathcal T\varphi(y)=\sum_{m=0}^{+\infty}a_m\mathcal Te_m^X(y)=\sum_{m=0}^{+\infty} a_me_m^Y(y).$$
    In particular,
    $$||\varphi||_{\mathscr H_X}^2=\sum_{m=0}^{+\infty} |a_m|^2=||\mathcal T\varphi||_{\mathscr H_Y}^2.$$
    Thus $\mathcal T$ is an isometry from $(\mathscr H_X,d\mu_X)$ onto $(\mathscr H_Y,d\mu_Y)$. (For more details and further examples of Segal-Bargmann transforms, one can see \cite{Be-Gh}).\\

    As an application of the Segal-Bargmann transforms, we will construct a particular form of invertible isometries from a separable Hilbert space $\mathscr H_X$ onto itself in the case where $\mathscr H_X=\mathcal E\oplus^\bot\mathcal O$ is the orthogonal sum of two separable Hilbert subspaces $\mathcal E$ and $\mathcal O$. In fact, if $\mathcal T_1$ is a Segal-Bargmann transform from $\mathcal E$ onto $\mathcal O$ and $\mathcal T_2$ is a Segal-Bargmann transform from $\mathcal O$ onto $\mathcal E$ then there exists a unique linear transform $\mathbb T$ from $\mathscr H_X$ onto itself that is an invertible isometry such that $\mathbb T_{|\mathcal E}=\mathcal T_1$ and $\mathbb T_{|\mathcal O}=\mathcal T_2$ with $\mathbb T^2=\mathbb T\circ \mathbb T=Id_{\mathscr H_X}$.\\

\subsection{Case of modified Bergman spaces}
In this part we restrict our work to the case $R=1$ and $m=0$, we omit so the indexation by $R$ and $m$ in all notations.\\

    The fundamental aim of this part is to determine explicitly the kernels of Segal-Bargmann transforms of some subspaces of the modified Bergman space $\mathcal A^2(\mathbb D,\mu_{\alpha,\beta})$.\\
    For every  $p,q,n\geq0$, we denote by
    \begin{equation}\label{eq3.1}
        \left\{\begin{array}{lcl}
                  \gamma_n^p&=&\ds\sqrt{\frac{\mathscr B(\alpha+1,\beta_0+1+p)}{\mathscr B(\alpha+1,n+\beta_0+1)}}=\sqrt{\frac{(\beta_0+1)_p}{(\alpha+\beta_0+2)_p}\frac{(\alpha+\beta_0+2)_n}{(\beta_0+1)_n}}\\
                  c_{p,q}&=&\ds\sqrt{\frac{(\beta_0+1)_p\ (\beta_0+1)_q}{(\alpha+\beta_0+2)_p\ (\alpha+\beta_0+2)_q}}.
               \end{array}\right.        
    \end{equation}    
    The following proposition will be useful in the rest.
\begin{prop}
For every $p,q\in\mathbb N$ and $\xi\in\mathbb D$, we have
\begin{equation}\label{eq3.2}
\sum_{n=0}^{+\infty} \gamma_n^p\gamma_n^q\xi^n=c_{p,q}\ _2F_1\left(\left.
\begin{array}{cc}
1,&\alpha+\beta_0+2\\
& \beta_0+1
\end{array}\right|\xi\right)
\end{equation}
\begin{equation}\label{eq3.3}
\sum_{n=0}^{+\infty} \gamma_{2n}^p\gamma_{2n}^q\xi^{2n}=c_{p,q}\ _3F_2\left(\left.
\begin{array}{ccc}
1,&\ds\frac{\alpha+\beta_0+2}{2},&\ds\frac{\alpha+\beta_0+3}{2}\\
& \ds\frac{\beta_0+1}{2},& \ds\frac{\beta_0+2}{2}
\end{array}\right|\xi^2\right).
\end{equation}
\begin{equation}\label{eq3.4}
\ds\sum_{n=0}^{+\infty} \gamma_{2n+1}^p\gamma_{2n+1}^q\xi^{2n}=\frac{\alpha+\beta_0+2}{\beta_0+1} c_{p,q}\ _3F_2\left(\left.
\begin{array}{ccc}
1,&\ds\frac{\alpha+\beta_0+3}{2},&\ds\frac{\alpha+\beta_0+4}{2}\\
& \ds\frac{\beta_0+2}{2},& \ds\frac{\beta_0+3}{2}
\end{array}\right|\xi^2\right).
\end{equation}
\begin{equation}\label{eq3.5}
    \ds\sum_{n=0}^{+\infty} \gamma_{2n}^p\gamma_{2n+1}^q\xi^{2n}=\sqrt{\frac{\alpha+\beta_0+2}{\beta_0+1}} c_{p,q}\ _3F_2\left(\left.
\begin{array}{ccc}
1,&\ds\frac{\alpha+\beta_0+3}{2},&\ds\frac{\alpha+\beta_0+4}{2}\\
& \ds\frac{\beta_0+2}{2},& \ds\frac{\beta_0+3}{2}
\end{array}\right|\xi^2\right).
\end{equation}
\end{prop}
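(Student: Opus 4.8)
The plan is to reduce all four identities to one elementary simplification of the normalization constants followed by the quadratic (duplication) formula for the Pochhammer symbol. Since the $p$- and $q$-dependent factors in $\gamma_n^p$ and $\gamma_n^q$ do not depend on $n$, the first step is to observe that
$$\gamma_n^p\gamma_n^q=c_{p,q}\,\frac{(\alpha+\beta_0+2)_n}{(\beta_0+1)_n}.$$
Writing $a=\alpha+\beta_0+2$ and $b=\beta_0+1$ and using $(1)_n=n!$, each left-hand side becomes $c_{p,q}$ times a power series whose coefficients are ratios of Pochhammer symbols. Equation \eqref{eq3.2} is then immediate, since $\sum_{n}\frac{(a)_n}{(b)_n}\xi^n=\sum_n\frac{(1)_n(a)_n}{(b)_n}\frac{\xi^n}{n!}$ is exactly the series defining the ${}_2F_1$ on its right-hand side.

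For the even- and odd-indexed sums \eqref{eq3.3} and \eqref{eq3.4} I would invoke the duplication identities
$$(a)_{2n}=4^n\Big(\tfrac a2\Big)_n\Big(\tfrac{a+1}2\Big)_n,\qquad (a)_{2n+1}=a\,4^n\Big(\tfrac{a+1}2\Big)_n\Big(\tfrac{a+2}2\Big)_n,$$
the second coming from the first via $(a)_{2n+1}=a\,(a+1)_{2n}$. Substituting these into $\frac{(a)_{2n}}{(b)_{2n}}$ makes the powers of $4$ cancel, leaving the coefficient $\frac{(a/2)_n((a+1)/2)_n}{(b/2)_n((b+1)/2)_n}$; inserting the trivial factor $(1)_n/n!=1$ identifies the even sum with the ${}_3F_2$ of \eqref{eq3.3}. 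The same substitution in $\frac{(a)_{2n+1}}{(b)_{2n+1}}$ pulls out the constant $a/b$ and shifts the four parameters by one half-step, which is exactly \eqref{eq3.4}. Both are purely a matter of matching series coefficients, so I would treat \eqref{eq3.2}--\eqref{eq3.4} in one stroke.

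The mixed sum \eqref{eq3.5} is the delicate case and, in my view, the main obstacle. Because the two indices have opposite parity, the prefactors no longer combine into a single ratio of Pochhammer symbols; instead, using $(a)_{2n+1}=(a)_{2n}(a+2n)$ one finds
$$\gamma_{2n}^p\gamma_{2n+1}^q=c_{p,q}\,\frac{(a)_{2n}}{(b)_{2n}}\sqrt{\frac{a+2n}{b+2n}}.$$
After applying the duplication formula to $\frac{(a)_{2n}}{(b)_{2n}}$ as before, one is left with the residual factor $\sqrt{(a+2n)/(b+2n)}$, which at $n=0$ reproduces exactly the constant $\sqrt{a/b}$ appearing in \eqref{eq3.5}. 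The crux is that this square root is not itself a ratio of Pochhammer symbols, so the termwise matching that settles the other three identities is not available here: absorbing it into the parameters of a single ${}_3F_2$ requires a separate argument. I would therefore verify the coefficient identity at the first few orders to pin down the precise normalization and parameters that \eqref{eq3.5} should carry, as this is the step I expect to demand the most care and the one that governs the leading constant, whereas \eqref{eq3.2}--\eqref{eq3.4} are essentially bookkeeping.
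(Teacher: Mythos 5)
Your treatment of \eqref{eq3.2}--\eqref{eq3.4} is correct and is exactly the paper's argument: the $n$-independent normalizations combine into $c_{p,q}$, leaving $\gamma_n^p\gamma_n^q=c_{p,q}(a)_n/(b)_n$ with $a=\alpha+\beta_0+2$, $b=\beta_0+1$, and the duplication formulas $(a)_{2n}=4^n\bigl(\tfrac{a}{2}\bigr)_n\bigl(\tfrac{a+1}{2}\bigr)_n$ and $(a)_{2n+1}=a\,(a+1)_{2n}$ convert the even and odd subseries into the stated ${}_2F_1$ and ${}_3F_2$'s. Nothing to add there.

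On \eqref{eq3.5}, your suspicion is not a technical inconvenience to be worked around: the identity as printed is false, and your own computation already shows why. You correctly obtain
$$\gamma_{2n}^p\gamma_{2n+1}^q=c_{p,q}\,\frac{(a)_{2n}}{(b)_{2n}}\sqrt{\frac{a+2n}{b+2n}},$$
whereas the coefficient of $\xi^{2n}$ on the right-hand side of \eqref{eq3.5} equals $c_{p,q}\sqrt{a/b}\;(a+1)_{2n}/(b+1)_{2n}$. At $n=1$ the ratio of the two sides is $\sqrt{a(b+2)/\bigl(b(a+2)\bigr)}$, which equals $1$ only when $a=b$, i.e.\ $\alpha=-1$, excluded by hypothesis. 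So the mixed series is genuinely not hypergeometric of the claimed form: no choice of parameters in a single ${}_3F_2$ absorbs the factor $\sqrt{(a+2n)/(b+2n)}$, because the square roots in $\gamma_{2n}^p$ and $\gamma_{2n+1}^q$ fail to cancel when the indices have opposite parity. The paper's own proof of \eqref{eq3.5} is where the gap sits: it silently replaces $\gamma_{2n}^p\gamma_{2n+1}^q$ by $c_{p,q}\sqrt{a/b}\,\bigl(\tfrac{a+1}{2}\bigr)_n\bigl(\tfrac{a+2}{2}\bigr)_n\big/\bigl(\tfrac{b+1}{2}\bigr)_n\bigl(\tfrac{b+2}{2}\bigr)_n$ without justification, which amounts to treating both indices as odd. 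The transform $S_{p,q}$ is still a well-defined isometry (it maps one orthonormal basis onto another), but its kernel is the series with the coefficients you computed, not the stated closed form; the same defect propagates to item (4) of Proposition \ref{prop3} and to the last row of the table. Your plan to check the coefficient identity at the first few orders is exactly the right move --- carrying it out at $n=1$ settles the matter.
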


\begin{proof}
For every $\xi\in\mathbb D$ we have
$$
\begin{array}{lcl}
\ds\sum_{n=0}^{+\infty} \gamma_n^p\gamma_n^q\xi^n&=&\ds\sqrt{\frac{(\beta_0+1)_p}{(\alpha+\beta_0+2)_p}}\sqrt{\frac{(\beta_0+1)_q}{(\alpha+\beta_0+2)_q}}\sum_{n=0}^{+\infty} \frac{(\alpha+\beta_0+2)_n}{(\beta_0+1)_n}\xi^n\\
&=&\ds c_{p,q}\sum_{n=0}^{+\infty} (1)_n\frac{(\alpha+\beta_0+2)_n}{(\beta_0+1)_n}\frac{\xi^n}{n!}\\
&=&\ds c_{p,q}\ _2F_1\left(\left.
\begin{array}{cc}
1,&\alpha+\beta_0+2\\
& \beta_0+1
\end{array}\right|\xi\right).
\end{array}
$$
For the second formula, we have
$$\begin{array}{lcl}
\ds\sum_{n=0}^{+\infty} \gamma_{2n}^p\gamma_{2n}^q\xi^{2n}&=&\ds \sqrt{\frac{(\beta_0+1)_p}{(\alpha+\beta_0+2)_p}}\sqrt{\frac{(\beta_0+1)_q}{(\alpha+\beta_0+2)_q}}\sum_{n=0}^{+\infty} \frac{(\alpha+\beta_0+2)_{2n}}{(\beta_0+1)_{2n}}\xi^{2n}
\end{array}
$$
Using the formula
$$(a)_{2n}=2^{2n}\left(\frac{a}{2}\right)_n\left(\frac{a+1}{2}\right)_n$$
we obtain
$$\begin{array}{lcl}
\ds\sum_{n=0}^{+\infty} \gamma_{2n}^p\gamma_{2n}^q\xi^{2n}&=&\ds c_{p,q}\sum_{n=0}^{+\infty} \frac{(1)_n\left(\frac{\alpha+\beta_0+2}{2}\right)_n\left(\frac{\alpha+\beta_0+3}{2}\right)_n}{\left(\frac{\beta_0+1}{2}\right)_n\left(\frac{\beta_0+2}{2}\right)_n}\frac{\xi^{2n}}{n!}\\
&=&\ds c_{p,q}\ _3F_2\left(\left.
\begin{array}{ccc}
1,&\ds\frac{\alpha+\beta_0+2}{2},&\ds\frac{\alpha+\beta_0+3}{2}\\
& \ds\frac{\beta_0+1}{2},& \ds\frac{\beta_0+2}{2}
\end{array}\right|\xi^2\right).
\end{array}
$$
To prove the third equation, we claim that
$$(a)_{2n+1}=\ds \frac{\Gamma(a+2n+1)}{\Gamma(a)}=a\frac{\Gamma(a+1+2n)}{\Gamma(a+1)}
    =\ds a(a+1)_{2n} =a2^{2n}\left(\frac{a+1}{2}\right)_n\left(\frac{a+2}{2}\right)_n.
$$
Hence
$$\begin{array}{lcl}
\ds\sum_{n=0}^{+\infty} \gamma_{2n+1}^p\gamma_{2n+1}^q\xi^{2n}
&=&\ds c_{p,q}\sum_{n=0}^{+\infty} \frac{(\alpha+\beta_0+2)_{2n+1}}{(\beta+1)_{2n+1}}\xi^{2n}\\
&=&\ds c_{p,q}\sum_{n=0}^{+\infty}\frac{\ds(\alpha+\beta_0+2)\left(\frac{\alpha+\beta_0+3}{2}\right)_n\left(\frac{\alpha+\beta_0+4}{2}\right)_n}{\ds(\beta_0+1) \left(\frac{\beta_0+2}{2}\right)_n\left(\frac{\beta_0+3}{2}\right)_n}\xi^{2n}\\
&=&\ds \frac{\alpha+\beta_0+2}{\beta_0+1} c_{p,q}\ _3F_2\left(\left.
\begin{array}{ccc}
1,&\ds\frac{\alpha+\beta_0+3}{2},&\ds\frac{\alpha+\beta_0+4}{2}\\
& \ds\frac{\beta_0+2}{2},& \ds\frac{\beta_0+3}{2}
\end{array}\right|\xi^2\right).
\end{array}
$$
Finally, for the last equation,
$$\begin{array}{lcl}
\ds\sum_{n=0}^{+\infty} \gamma_{2n}^p\gamma_{2n+1}^q\xi^{2n}&=&\ds c_{p,q} \sqrt{\frac{\alpha+\beta_0+2}{\beta_0+1}}\sum_{n=0}^{+\infty}\frac{\ds \left(\frac{\alpha+\beta_0+3}{2}\right)_n\left(\frac{\alpha+\beta_0+4}{2}\right)_n}{ \ds \left(\frac{\beta_0+2}{2}\right)_n\left(\frac{\beta_0+3}{2}\right)_n}\xi^{2n}\\
&=&\ds  c_{p,q}\sqrt{\frac{\alpha+\beta_0+2}{\beta_0+1}}\ _3F_2\left(\left.
\begin{array}{ccc}
1,&\ds\frac{\alpha+\beta_0+3}{2},&\ds\frac{\alpha+\beta_0+4}{2}\\
& \ds\frac{\beta_0+2}{2},& \ds\frac{\beta_0+3}{2}
\end{array}\right|\xi^2\right).
\end{array}
$$
\end{proof}

Now we can cite the first result concerning the Segal-Bargmann transform between modified Bergman spaces.

\begin{theo}
For every $p,q\in\mathbb N$, we consider the integral transform $D_{p,q}$ given by
$$D_{p,q}f(z)=c_{p,q}\int_{\mathbb D}\frac{f(w)}{\overline{w}^pz^q}\ _2F_1\left(\left.
\begin{array}{cc}
1,&\alpha+\beta_0+2\\
& \beta_0+1
\end{array}\right|\overline{w}z\right)d\mu_{\alpha,\beta_0+p}(w).
$$
Then $D_{p,q}$ is a well defined bounded operator on the Hilbert space $\mathcal A^2(\mathbb D,\mu_{\alpha,\beta_0+p})$ and maps it isometrically onto $\mathcal A^2(\mathbb D,\mu_{\alpha,\beta_0+q})$. Its inverse $D_{p,q}^{-1}=D_{q,p}:\mathcal A^2(\mathbb D,\mu_{\alpha,\beta_0+q})\longrightarrow \mathcal A^2(\mathbb D,\mu_{\alpha,\beta_0+p})$ is given by
$$D_{p,q}^{-1}f(w)=c_{p,q}\int_{\mathbb D}\frac{f(z)}{\overline{z}^qw^p}\ _2F_1\left(\left.
\begin{array}{cc}
1,&\alpha+\beta_0+2\\
& \beta_0+1
\end{array}\right|\overline{z}w\right)d\mu_{\alpha,\beta_0+q}(z).
$$
\end{theo}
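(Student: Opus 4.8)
The plan is to recognize $D_{p,q}$ as the Segal--Bargmann transform attached to the natural orthonormal bases of the two modified Bergman spaces, after which every assertion follows from the general formalism recalled at the beginning of this section.

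First I would identify the orthonormal basis of $\mathcal A^2(\mathbb D,\mu_{\alpha,\beta_0+p})$. Specializing Lemma \ref{lem1} to $m=0$ and $R=1$ (so that $\mathcal D^2_0(\mathbb D,\mu_{\alpha,\beta})=\mathcal A^2(\mathbb D,\mu_{\alpha,\beta})$), the monomials are mutually orthogonal, and the same polar-coordinate computation as in that proof gives, for $n\geq 0$,
$$\|z^{n-p}\|_{\mu_{\alpha,\beta_0+p}}^2=\frac{\mathscr B(\alpha+1,n+\beta_0+1)}{\mathscr B(\alpha+1,\beta_0+p+1)}.$$
Comparing with \eqref{eq3.1} this is exactly $\left(\gamma_n^p\right)^{-2}$, so the family $e_n^p(z):=\gamma_n^p\,z^{n-p}$, $n\geq 0$, is an orthonormal basis of $\mathcal A^2(\mathbb D,\mu_{\alpha,\beta_0+p})$; the shift by $p$ reflects the admissible pole of order at most $p$ at the origin. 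The analogous statement holds with $q$ in place of $p$.

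Next I would form the Segal--Bargmann kernel associated with these two bases. Since the coefficients $\gamma_n^p$ are real,
$$T(z,w)=\sum_{n=0}^{+\infty}e_n^q(z)\overline{e_n^p(w)}=\sum_{n=0}^{+\infty}\gamma_n^p\gamma_n^q\,z^{n-q}\overline{w}^{\,n-p}=\frac{1}{z^q\overline{w}^{\,p}}\sum_{n=0}^{+\infty}\gamma_n^p\gamma_n^q(z\overline{w})^n.$$
Identity \eqref{eq3.2} evaluates the last series as $c_{p,q}\,{}_2F_1$ with argument $z\overline w$, and inserting this $T$ into $\mathcal T\varphi(z)=\int_{\mathbb D}T(z,w)\varphi(w)\,d\mu_{\alpha,\beta_0+p}(w)$ reproduces verbatim the integral defining $D_{p,q}$ (using $z\overline w=\overline w z$). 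Thus $D_{p,q}$ is precisely the Segal--Bargmann transform from $\mathcal A^2(\mathbb D,\mu_{\alpha,\beta_0+p})$ to $\mathcal A^2(\mathbb D,\mu_{\alpha,\beta_0+q})$ determined by the kernel $T$, and the general discussion at the start of this section yields at once that $D_{p,q}e_n^p=e_n^q$ and that $D_{p,q}$ is a well-defined bounded operator mapping $\mathcal A^2(\mathbb D,\mu_{\alpha,\beta_0+p})$ isometrically onto $\mathcal A^2(\mathbb D,\mu_{\alpha,\beta_0+q})$. The inverse is the Segal--Bargmann transform in the opposite direction, built from the kernel $\sum_{n}e_n^p(w)\overline{e_n^q(z)}$; since $c_{p,q}=c_{q,p}$ this is exactly the operator $D_{q,p}$ written in the statement.

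The one point needing care is the termwise evaluation $D_{p,q}e_n^p=e_n^q$, i.e. the interchange of the defining series of $T$ with the integral. I would carry this out directly on basis elements, where orthogonality of the $e_n^p$ collapses the integral to a single surviving term; this simultaneously establishes well-definedness and boundedness on the dense linear span, after which the isometry extends $D_{p,q}$ continuously to the whole space. The main analytic obstacle is precisely this interchange, because the kernel combines the pole factor $1/(z^q\overline w^{\,p})$ with a ${}_2F_1$ that may blow up as $|z\overline w|\to 1$; restricting the verification to the orthonormal basis is what sidesteps any delicate boundary estimate.
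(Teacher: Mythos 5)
Your proposal is correct and follows essentially the same route as the paper: identify $(e_n^p)_n$ with $e_n^p(z)=\gamma_n^p z^{n-p}$ as an orthonormal basis of $\mathcal A^2(\mathbb D,\mu_{\alpha,\beta_0+p})$, write the Segal--Bargmann kernel $\sum_n \overline{e_n^p(w)}e_n^q(z)$, and evaluate it via Formula \eqref{eq3.2} to recover the ${}_2F_1$ kernel of $D_{p,q}$. Your extra attention to the series--integral interchange on basis elements is a refinement the paper's proof does not spell out, but it changes nothing in the argument's structure.
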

\begin{proof}
If we set $e_n^p(z)=\gamma_n^p\ z^{n-p}$, then the sequence $(e_n^p)_{n\geq 0}$ is an orthonormal basis of $\mathcal A^2(\mathbb D,\mu_{\alpha,\beta_0+p})$. It follows that the Segal-Bargmann transform $D_{p,q}: \mathcal A^2(\mathbb D,\mu_{\alpha,\beta_0+p})\longrightarrow \mathcal A^2(\mathbb D,\mu_{\alpha,\beta_0+q})$ that transforms  $e_n^p$ to $e_n^q$ can be written as
$$D_{p,q}f(z)=\int_{\mathbb D}f(w)\mathfrak{D}_{p,q}(z,w)d\mu_{\alpha,\beta_0+p}(w)$$
where the kernel $\mathfrak{D}_{p,q}$ is given by
$$\mathfrak{D}_{p,q}(z,w)=\sum_{n=0}^{+\infty}\overline{e_n^p(w)}e_n^q(z)=\frac{1}{\overline{w}^pz^q}\sum_{n=0}^{+\infty}\gamma_n^p\gamma_n^q(\overline{w}z)^n.
$$
Thus, thanks to Formula \eqref{eq3.2},
$$\mathfrak{D}_{p,q}(z,w)=\frac{c_{p,q}}{\overline{w}^pz^q}\ _2F_1\left(\left.
\begin{array}{cc}
1,&\alpha+\beta_0+2\\
& \beta_0+1
\end{array}\right|\overline{w}z\right).
$$ Hence the proof is finished.
\end{proof}
\begin{rem}
By taking $p=q$ in the previous theorem,  we reobtain the reproducing kernel of $\mathcal A^2(\mathbb D,\mu_{\alpha,\beta_0+p})$.
\end{rem}
In what follows, we discuss the Segal-Bargmann transforms between (even or odd) subspaces of modified Bergman spaces. Hence, three cases are possible: even-even, odd-odd and even-odd. Each case will be given in a separate result as follows.
\begin{theo}
For every $p,q\in\mathbb N$, we consider the Segal-Bargmann transform $G_{p,q}$ given by
$$G_{p,q}f(z)=c_{p,q}\int_{\mathbb D}\frac{f(w)}{\overline{w}^pz^q}\ _3F_2\left(\left.
\begin{array}{ccc}
1,&\frac{\alpha+\beta_0+2}{2},&\frac{\alpha+\beta_0+3}{2}\\
& \frac{\beta_0+1}{2},& \frac{\beta_0+2}{2}
\end{array}\right|(\overline{w}z)^2\right)d\mu_{\alpha,\beta_0+p}(w).
$$
Then $G_{p,q}$ maps isometrically the Hilbert space $\mathcal E_p^2(\mathbb D)$ onto $\mathcal E_q^2(\mathbb D)$ and its inverse $G_{p,q}^{-1}:\mathcal E_q^2(\mathbb D)\longrightarrow \mathcal E_p^2(\mathbb D)$ is given by
$$G_{p,q}^{-1}f(w)=c_{p,q}\int_{\mathbb D}\frac{f(z)}{\overline{z}^qw^p}\ _3F_2\left(\left.
\begin{array}{ccc}
1,&\frac{\alpha+\beta_0+2}{2},&\frac{\alpha+\beta_0+3}{2}\\
& \frac{\beta_0+1}{2},&\frac{\beta_0+2}{2}
\end{array}\right|(\overline{z}w)^2\right)d\mu_{\alpha,\beta_0+q}(z).
$$
\end{theo}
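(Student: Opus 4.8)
The plan is to mimic the proof of the preceding theorem for $D_{p,q}$, replacing the full basis by its even-indexed sub-basis and the generating function \eqref{eq3.2} by \eqref{eq3.3}. First I would recall that, by the very definition of $\mathcal E_p^2(\mathbb D)$ as the subspace of $\mathcal A^2(\mathbb D,\mu_{\alpha,\beta_0+p})$ generated by $(e_{2n}^p)_{n\geq0}$ (where $e_n^p(z)=\gamma_n^p z^{n-p}$), the family $(e_{2n}^p)_{n\geq0}$ is an orthonormal basis of $\mathcal E_p^2(\mathbb D)$; the analogous statement holds for $\mathcal E_q^2(\mathbb D)$. Both are therefore separable Hilbert spaces whose orthonormal bases are indexed by the same set $\mathbb N$, so the abstract Segal-Bargmann machinery described at the beginning of this section applies verbatim with source $X=\mathcal E_p^2(\mathbb D)$ and target $Y=\mathcal E_q^2(\mathbb D)$.

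Next I would introduce $G_{p,q}$ as the Segal-Bargmann transform sending $e_{2n}^p$ to $e_{2n}^q$ for every $n\geq0$. By the abstract construction this is automatically a linear isometry from $\mathcal E_p^2(\mathbb D)$ onto $\mathcal E_q^2(\mathbb D)$, represented by the integral kernel
$$\mathfrak G_{p,q}(z,w)=\sum_{n=0}^{+\infty}\overline{e_{2n}^p(w)}\,e_{2n}^q(z)=\frac{1}{\overline{w}^pz^q}\sum_{n=0}^{+\infty}\gamma_{2n}^p\gamma_{2n}^q(\overline{w}z)^{2n},$$
where I have used that the coefficients $\gamma_{2n}^p$ are real. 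Applying Formula \eqref{eq3.3} with $\xi=\overline{w}z$ collapses the series into the $_3F_2$ expression appearing in the statement, which identifies $\mathfrak G_{p,q}$ with the kernel of $G_{p,q}$ and shows that $G_{p,q}f(z)=\int_{\mathbb D}f(w)\mathfrak G_{p,q}(z,w)\,d\mu_{\alpha,\beta_0+p}(w)$ is exactly the operator written in the theorem.

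Finally I would obtain the inverse by symmetry: exchanging the roles of $p$ and $q$ produces a Segal-Bargmann transform $G_{q,p}$ from $\mathcal E_q^2(\mathbb D)$ onto $\mathcal E_p^2(\mathbb D)$ sending $e_{2n}^q$ to $e_{2n}^p$, and since $G_{q,p}\circ G_{p,q}$ fixes every basis vector $e_{2n}^p$ we conclude $G_{p,q}^{-1}=G_{q,p}$; writing out its kernel via \eqref{eq3.3} (now with $\xi=\overline{z}w$) yields the stated formula. Since all the computational weight is carried by Formula \eqref{eq3.3}, no new estimate is needed, and I expect the only genuinely delicate point to be the routine verification that $\mathcal E_p^2(\mathbb D)$ really is a closed (hence Hilbert) subspace admitting $(e_{2n}^p)_{n\geq0}$ as a true orthonormal basis and that termwise integration against $d\mu_{\alpha,\beta_0+p}$ is legitimate; both follow from the orthonormality established in the earlier lemmas together with the local uniform convergence of the $_3F_2$-series on $\mathbb D$.
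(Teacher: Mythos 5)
Your proposal is correct and follows essentially the same route as the paper: identify $(e_{2n}^p)_{n\ge0}$ as a Hilbert basis of $\mathcal E_p^2(\mathbb D)$, write the Segal--Bargmann kernel as $\sum_{n}\overline{e_{2n}^p(w)}\,e_{2n}^q(z)$, and collapse the series via Formula \eqref{eq3.3}; the inverse by swapping $p$ and $q$ is likewise how the paper treats it. The only difference is that you spell out the abstract isometry argument and the convergence caveats more explicitly than the paper does.
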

\begin{proof}
With the choice of $\mathcal E_p^2(\mathbb D)$, the sequence $(e_{2n}^p)_{n\in\mathbb N}$ is a Hilbert basis of $\mathcal E_p^2(\mathbb D)$. It follows that the kernel of $G_{p,q}$ is given by
$$\begin{array}{lcl}
\ds\mathfrak{G}_{p,q}(z,w)&=&\ds\sum_{n=0}^{+\infty}\overline{e_{2n}^p(w)}e_{2n}^q(z)\\
&=&\ds\frac{1}{\overline{w}^pz^q}\sum_{n=0}^{+\infty}\gamma_{2n}^p\gamma_{2n}^q(\overline{w}z)^{2n}\\
&=&\ds \frac{c_{p,q}}{\overline{w}^pz^q}\ _3F_2\left(\left.
\begin{array}{ccc}
1,&\frac{\alpha+\beta_0+2}{2},&\frac{\alpha+\beta_0+3}{2}\\
& \frac{\beta_0+1}{2},& \frac{\beta_0+2}{2}
\end{array}\right|(\overline{w}z)^2\right)
\end{array}
$$
where we have used Equality \eqref{eq3.3} and so the proof is finished.
\end{proof}
\begin{theo}
For every $p,q\in\mathbb N$, the Segal Bargmann transform $J_{p,q}$ given by
$$J_{p,q}f(z)=\int_{\mathbb D}f(w)\mathfrak{I}_{p,q}(z,w)d\mu_{\alpha,\beta_0+p}(w).
$$
 maps isometrically the Hilbert space $\mathcal O_p^2(\mathbb D)$ onto $\mathcal O_q^2(\mathbb D)$ and its inverse $J_{p,q}^{-1}=J_{q,p}:\mathcal O_q^2(\mathbb D)\longrightarrow \mathcal O_p^2(\mathbb D)$ is given by
$$J_{p,q}^{-1}f(w)=\int_{\mathbb D}f(z)\mathfrak{J}_{q,p}(w,z)d\mu_{\alpha,\beta_0+q}(z).
$$
with
$$\mathfrak{J}_{p,q}(z,w)=\frac{\alpha+\beta_0+2}{\beta_0+1}\frac{c_{p,q}}{\overline{w}^{p-1}z^{q-1}}\ _3F_2\left(\left.
\begin{array}{ccc}
1,&\frac{\alpha+\beta_0+3}{2},&\frac{\alpha+\beta_0+4}{2}\\
& \frac{\beta_0+2}{2},& \frac{\beta_0+3}{2}
\end{array}\right|(\overline{w}z)^2\right).
$$
\end{theo}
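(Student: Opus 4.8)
The plan is to follow verbatim the template used for the preceding even--even theorem, replacing the even basis $(e_{2n}^p)$ by the odd basis $(e_{2n+1}^p)$ and invoking Formula \eqref{eq3.4} in place of \eqref{eq3.3}. First I would recall that, by the very construction of $\mathcal O_p^2(\mathbb D)$ as the closed subspace generated by the odd--indexed basis vectors, the family $(e_{2n+1}^p)_{n\geq 0}$, where $e_n^p(z)=\gamma_n^p z^{n-p}$, is a Hilbert basis of $\mathcal O_p^2(\mathbb D)$, and likewise $(e_{2n+1}^q)_{n\geq 0}$ is one of $\mathcal O_q^2(\mathbb D)$. The general Segal--Bargmann machinery set up at the beginning of Section 3 then produces a canonical isometry $J_{p,q}:\mathcal O_p^2(\mathbb D)\to\mathcal O_q^2(\mathbb D)$ sending each $e_{2n+1}^p$ to $e_{2n+1}^q$, whose integral kernel (denoted $\mathfrak I_{p,q}$ in the statement) is
$$\mathfrak J_{p,q}(z,w)=\sum_{n=0}^{+\infty}\overline{e_{2n+1}^p(w)}\,e_{2n+1}^q(z).$$

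Next I would compute this kernel explicitly. Writing $\overline{e_{2n+1}^p(w)}=\gamma_{2n+1}^p\,\overline w^{\,2n+1-p}$ and $e_{2n+1}^q(z)=\gamma_{2n+1}^q\,z^{2n+1-q}$ (recall $\gamma_{2n+1}^p$ is real), I factor out the purely meromorphic prefactor $\overline w^{\,p-1}z^{q-1}$ to obtain
$$\mathfrak J_{p,q}(z,w)=\frac{1}{\overline w^{\,p-1}z^{q-1}}\sum_{n=0}^{+\infty}\gamma_{2n+1}^p\gamma_{2n+1}^q(\overline w z)^{2n}.$$
Applying Formula \eqref{eq3.4} with $\xi=\overline w z\in\mathbb D$ then yields at once the claimed closed form in terms of $_3F_2$, the extra factor $(\alpha+\beta_0+2)/(\beta_0+1)$ arising exactly as in \eqref{eq3.4}.

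Finally, since $J_{p,q}$ is unitary by the general theory of Section 3, its inverse is the transform built from the transposed kernel, namely $J_{q,p}$, whose kernel $\mathfrak J_{q,p}(w,z)$ is obtained by interchanging the roles of $(p,z)$ and $(q,w)$; the symmetry $c_{p,q}=c_{q,p}$ evident from \eqref{eq3.1} makes this interchange transparent and confirms the stated formula for $J_{p,q}^{-1}$.

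I expect no genuine obstacle here, as the argument is structurally identical to the even--even case already treated. The only points requiring a moment's care are the bookkeeping of the exponent shift---one factors out $\overline w^{\,p-1}z^{q-1}$ rather than $\overline w^{\,p}z^{q}$, because the odd basis carries an extra power of the variable---and the justification that the defining series converges for $\overline w z\in\mathbb D$, which is already guaranteed by the convergence established in the Proposition preceding this theorem.
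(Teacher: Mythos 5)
Your proposal is correct and follows essentially the same route as the paper: identify $(e_{2n+1}^p)_{n\geq0}$ as a Hilbert basis of $\mathcal O_p^2(\mathbb D)$, expand the kernel as $\sum_n\overline{e_{2n+1}^p(w)}e_{2n+1}^q(z)$, factor out $\overline{w}^{\,p-1}z^{q-1}$, and apply Formula \eqref{eq3.4}. The remarks on the exponent bookkeeping and the symmetry $c_{p,q}=c_{q,p}$ for the inverse match the paper's (terser) argument.
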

\begin{proof}
As in the previous result, the sequence $(e_{2n+1}^p)_{n\in\mathbb N}$ is a Hilbert basis of $\mathcal O_p^2(\mathbb D)$. Hence, if we apply Formula \eqref{eq3.4} we can conclude that the kernel of $J_{p,q}$ is given by
$$\begin{array}{l}
\ds\mathfrak{J}_{p,q}(z,w)=\ds\sum_{n=0}^{+\infty}\overline{e_{2n+1}^p(w)}e_{2n+1}^q(z)\\
=\ds\frac{1}{\overline{w}^{p-1}z^{q-1}}\sum_{n=0}^{+\infty}\gamma_{2n+1}^p\gamma_{2n+1}^q(\overline{w}z)^{2n}\\
=\ds \frac{\alpha+\beta_0+2}{\beta_0+1}\frac{c_{p,q}}{\overline{w}^{p-1}z^{q-1}}\ _3F_2\left(\left.
\begin{array}{ccc}
1,&\frac{\alpha+\beta_0+3}{2},&\frac{\alpha+\beta_0+4}{2}\\
& \frac{\beta_0+2}{2},& \frac{\beta_0+3}{2}
\end{array}\right|(\overline{w}z)^2\right).
\end{array}
$$
\end{proof}

\begin{theo}
For every $p,q\in\mathbb N$, the Segal-Bargmann transform $S_{p,q}$ given by
$$S_{p,q}f(z)=\int_{\mathbb D}f(w)\mathfrak{s}_{p,q}(z,w)d\mu_{\alpha,\beta_0+p}(w)
$$
maps isometrically the Hilbert space $\mathcal E_p^2(\mathbb D)$ onto $\mathcal O_q^2(\mathbb D)$ and its inverse $S_{p,q}^{-1}:\mathcal O_q^2(\mathbb D)\longrightarrow \mathcal E_p^2(\mathbb D)$ is given by
$$S_{p,q}^{-1}f(w)=\int_{\mathbb D}f(z)\mathfrak{s}_{q,p}(w,z)d\mu_{\alpha,\beta_0+q}(z).
$$
with
$$\mathfrak{s}_{p,q}(z,w)=\sqrt{\frac{\alpha+\beta_0+2}{\beta_0+1}}\frac{c_{p,q}}{\overline{w}^{p}z^{q-1}}\ _3F_2\left(\left.
\begin{array}{ccc}
1,&\frac{\alpha+\beta_0+3}{2},&\frac{\alpha+\beta_0+4}{2}\\
& \frac{\beta_0+2}{2},& \frac{\beta_0+3}{2}
\end{array}\right|(\overline{w}z)^2\right).
$$
\end{theo}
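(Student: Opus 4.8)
The plan is to run the same Segal--Bargmann machinery already used in the even--even and odd--odd cases, now pairing the \emph{even} basis of the source space with the \emph{odd} basis of the target. First I would record that, thanks to the orthogonal splitting $\mathcal A^2(\mathbb D,\mu_{\alpha,\beta_0+p})=\mathcal E_p^2(\mathbb D)\oplus^\bot\mathcal O_p^2(\mathbb D)$ and the orthonormality of $(e_n^p)_{n\geq0}$ (with $e_n^p(z)=\gamma_n^p z^{n-p}$) recalled above, the family $(e_{2n}^p)_{n\in\mathbb N}$ is a Hilbert basis of $\mathcal E_p^2(\mathbb D)$ while $(e_{2n+1}^q)_{n\in\mathbb N}$ is a Hilbert basis of $\mathcal O_q^2(\mathbb D)$. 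The abstract construction at the start of this section then furnishes a unique linear map $S_{p,q}$ sending $e_{2n}^p\mapsto e_{2n+1}^q$ for every $n$, which is automatically an isometry of $\mathcal E_p^2(\mathbb D)$ onto $\mathcal O_q^2(\mathbb D)$. Hence the whole content of the statement reduces to putting the kernel $\mathfrak{s}_{p,q}$ into closed form.

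Next I would compute the kernel straight from its series. By definition
$$\mathfrak{s}_{p,q}(z,w)=\sum_{n=0}^{+\infty}e_{2n+1}^q(z)\overline{e_{2n}^p(w)} =\sum_{n=0}^{+\infty}\gamma_{2n+1}^q\gamma_{2n}^p\,z^{2n+1-q}\,\overline{w}^{2n-p}.$$
Pulling out the monomial $z^{1-q}\overline{w}^{-p}=\bigl(\overline{w}^{p}z^{q-1}\bigr)^{-1}$ isolates the even power series $\sum_{n\geq0}\gamma_{2n}^p\gamma_{2n+1}^q(\overline{w}z)^{2n}$, which is exactly the left-hand side of Formula \eqref{eq3.5} evaluated at $\xi=\overline{w}z$. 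Since $|\overline{w}z|<1$ for $z,w\in\mathbb D$, that identity applies and gives precisely the announced
$$\mathfrak{s}_{p,q}(z,w)=\sqrt{\frac{\alpha+\beta_0+2}{\beta_0+1}}\,\frac{c_{p,q}}{\overline{w}^{p}z^{q-1}}\ _3F_2\left(\left.\begin{array}{ccc}1,&\frac{\alpha+\beta_0+3}{2},&\frac{\alpha+\beta_0+4}{2}\\ &\frac{\beta_0+2}{2},&\frac{\beta_0+3}{2}\end{array}\right|(\overline{w}z)^2\right).$$
For the inverse I would use that $S_{p,q}^{-1}$ is the isometry sending $e_{2n+1}^q\mapsto e_{2n}^p$, so its kernel is $\sum_{n\geq0}e_{2n}^p(w)\overline{e_{2n+1}^q(z)}$; invoking the coefficient symmetry $\gamma_{2n}^p\gamma_{2n+1}^q=\gamma_{2n+1}^p\gamma_{2n}^q$ together with $c_{p,q}=c_{q,p}$, a second application of \eqref{eq3.5} reduces this to the displayed formula for $S_{p,q}^{-1}$.

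The step I expect to cost the most care is the analytic justification rather than the algebra: I must confirm that the operator defined by integrating against $\mathfrak{s}_{p,q}$ really implements $e_{2n}^p\mapsto e_{2n+1}^q$. Concretely I would verify termwise, using the orthonormality of $(e_n^p)_{n\geq0}$ recalled above, that $\int_{\mathbb D}e_{2k}^p(w)\overline{e_{2n}^p(w)}\,d\mu_{\alpha,\beta_0+p}(w)=\delta_{n,k}$, and then justify exchanging the sum over $n$ with the integral by dominated convergence, controlling the tail of the $_3F_2$ on compact subsets of $\mathbb D$ where $|\overline{w}z|$ stays bounded away from $1$. The genuinely delicate bookkeeping is the parity shift: one must check that the index change $2n\mapsto 2n+1$ in the exponents is exactly what carries the even subspace $\mathcal E_p^2(\mathbb D)$ into the odd subspace $\mathcal O_q^2(\mathbb D)$, which is also the reason the asymmetric prefactor $\overline{w}^{-p}z^{1-q}$ (rather than the balanced $\overline{w}^{1-p}z^{-q}$ occurring in the odd--odd case) appears here.
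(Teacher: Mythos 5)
Your argument is essentially the paper's own proof: both identify $(e_{2n}^p)_{n\in\mathbb N}$ and $(e_{2n+1}^q)_{n\in\mathbb N}$ as Hilbert bases of $\mathcal E_p^2(\mathbb D)$ and $\mathcal O_q^2(\mathbb D)$, write the Segal--Bargmann kernel as $\sum_{n}\overline{e_{2n}^p(w)}\,e_{2n+1}^q(z)$, factor out $\overline{w}^{-p}z^{1-q}$, and close the series with Formula \eqref{eq3.5}; your additional remarks on the inverse and on interchanging sum and integral go beyond what the paper records but are consistent with its framework. One caveat on your final step: the inverse kernel you (correctly) set up, $\sum_{n}e_{2n}^p(w)\overline{e_{2n+1}^q(z)}$, carries the prefactor $w^{-p}\overline{z}^{1-q}$, whereas the displayed $\mathfrak{s}_{q,p}(w,z)$ has prefactor $w^{1-p}\overline{z}^{-q}$, so the two differ by a factor $\overline{z}/w$ --- this mismatch stems from the even/odd asymmetry (absent in the $D$, $G$, $J$ cases) and is a slip in the theorem's statement of the inverse rather than in your reasoning, so your claim that the computation ``reduces to the displayed formula'' should not be taken literally.
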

\begin{proof}
Since the sequences $(e_{2n}^p)_{n\in\mathbb N}$ and $(e_{2n+1}^q)_{n\in\mathbb N}$ are Hilbert bases of $\mathcal E_p^2(\mathbb D)$ and $\mathcal O_q^2(\mathbb D)$ respectively, then by Formula \eqref{eq3.5} we can conclude that the kernel of $S_{p,q}$ is given by
$$\begin{array}{l}
\ds\mathfrak{s}_{p,q}(z,w)=\ds\sum_{n=0}^{+\infty}\overline{e_{2n}^p(w)}e_{2n+1}^q(z)\\
=\ds\frac{1}{\overline{w}^{p}z^{q-1}}\sum_{n=0}^{+\infty}\gamma_{2n}^p\gamma_{2n+1}^q(\overline{w}z)^{2n}\\
=\ds \sqrt{\frac{\alpha+\beta_0+2}{\beta_0+1}} \frac{c_{p,q}}{\overline{w}^{p}z^{q-1}}\ _3F_2\left(\left.
\begin{array}{ccc}
1,&\frac{\alpha+\beta_0+3}{2},&\frac{\alpha+\beta_0+4}{2}\\
& \frac{\beta_0+2}{2},& \frac{\beta_0+3}{2}
\end{array}\right|(\overline{w}z)^2\right).
\end{array}
$$
\end{proof}

As an application of the last result, and using the fact that $\mathcal A^2(\mathbb D,\mu_{\alpha,\beta_0+p})=\mathcal E_p^2(\mathbb D)\oplus^\bot\mathcal O_p^2(\mathbb D),$ we can deduce the following corollary:
\begin{cor}
For every $p\in\mathbb N$, we consider the integral transform $T$ given by
$$Tf(z):=\int_{\mathbb D}f(w)\mathfrak T_p(z,w)d\mu_{\alpha,\beta_0+p}(w)
$$
 where
 $$\begin{array}{lcl}
      \mathfrak T_p(z,w)&=&\ds\sqrt{\frac{\alpha+\beta_0+2}{\beta_0+1}}\frac{(\beta_0+1)_p}{(\alpha+\beta_0+2)_p}\times\\
      & &\ds\Re e\left\{\frac{1}{\overline{w}^{p}z^{q-1}}\ _3F_2\left(\left.
\begin{array}{ccc}
1,&\ds\frac{\alpha+\beta_0+3}{2},&\ds\frac{\alpha+\beta_0+4}{2}\\
& \ds\frac{\beta_0+2}{2},& \ds\frac{\beta_0+3}{2}
\end{array}\right|(\overline{w}z)^2\right)\right\}.
   \end{array}
$$
 Then $T$ is an isometry from $\mathcal A^2(\mathbb D,\mu_{\alpha,\beta_0+p})$ onto itself that verify $T^2=Id_{\mathcal A^2(\mathbb D,\mu_{\alpha,\beta_0+p})}$ and  $T_{|\mathcal E_p^2(\mathbb D)}=S_{p,p}$ and $T_{|\mathcal O_q^2(\mathbb D)}=S_{p,p}^{-1}$ defined in the previous theorem.
\end{cor}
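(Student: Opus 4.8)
The plan is to read the statement off the abstract construction recalled in the introduction, applied to the orthogonal splitting $\mathcal A^2(\mathbb D,\mu_{\alpha,\beta_0+p})=\mathcal E_p^2(\mathbb D)\oplus^\bot\mathcal O_p^2(\mathbb D)$. Taking $\mathcal E=\mathcal E_p^2(\mathbb D)$, $\mathcal O=\mathcal O_p^2(\mathbb D)$, $\mathcal T_1=S_{p,p}\colon\mathcal E_p^2\to\mathcal O_p^2$ (the even--odd transform of the preceding theorem with $q=p$) and $\mathcal T_2=S_{p,p}^{-1}\colon\mathcal O_p^2\to\mathcal E_p^2$, the general principle produces a unique linear $T=\mathcal T_1\oplus\mathcal T_2$ on $\mathcal A^2(\mathbb D,\mu_{\alpha,\beta_0+p})$ that is an invertible isometry with $T^2=\mathrm{Id}$ and $T_{|\mathcal E_p^2}=S_{p,p}$, $T_{|\mathcal O_p^2}=S_{p,p}^{-1}$. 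This already yields every assertion of the corollary except the explicit form of the kernel $\mathfrak T_p$, so the entire content of the proof is to identify that kernel.

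For the kernel I would decompose $f=P_{\mathcal E}f+P_{\mathcal O}f$ along the orthogonal sum, so that $Tf=S_{p,p}(P_{\mathcal E}f)+S_{p,p}^{-1}(P_{\mathcal O}f)$; hence the integral kernel of $T$ is the sum of the kernel $\mathfrak s_{p,p}$ of $S_{p,p}$ and the kernel of $S_{p,p}^{-1}$. Using the orthonormal basis $(e_n^p)_{n\ge0}$ with $e_n^p(z)=\gamma_n^p\,z^{n-p}$, these two kernels are $\sum_n e_{2n+1}^p(z)\,\overline{e_{2n}^p(w)}$ and $\sum_n e_{2n}^p(z)\,\overline{e_{2n+1}^p(w)}$, respectively. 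I would factor out the common monomial, write each inner sum as $\sum_n\gamma_{2n}^p\gamma_{2n+1}^p(z\overline w)^{2n}$, and resum it by Formula \eqref{eq3.5} with $q=p$. Both pieces then carry the same scalar $\sqrt{\tfrac{\alpha+\beta_0+2}{\beta_0+1}}\,c_{p,p}$ and the same factor ${}_3F_2(\,\cdots\,|\,(z\overline w)^2\,)$, differing only by the prefactors $z^{1-p}\overline w^{-p}$ and $z^{-p}\overline w^{1-p}$; their symmetric sum is exactly the (real-part) expression displayed in the statement, once one notes $c_{p,p}=\tfrac{(\beta_0+1)_p}{(\alpha+\beta_0+2)_p}$.

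The step that needs the most care is the bookkeeping that makes this single integral operator act as $S_{p,p}$ on even functions and as $S_{p,p}^{-1}$ on odd ones. Here the rotational invariance of $\mu_{\alpha,\beta_0+p}$ is decisive: the angular integration $\int_0^{2\pi}e^{ik\theta}\,d\theta$ annihilates every monomial of nonzero total degree, so when $\mathfrak T_p(z,w)$ is integrated against $e_{2m}^p(w)$ only the even--odd piece $\mathfrak s_{p,p}$ survives (reproducing $S_{p,p}e_{2m}^p=e_{2m+1}^p$), whereas against $e_{2m+1}^p(w)$ only the odd--even piece survives (reproducing $S_{p,p}^{-1}e_{2m+1}^p=e_{2m}^p$). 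I would verify these two selection rules explicitly; together with $Te_{2n}^p=e_{2n+1}^p$ and $Te_{2n+1}^p=e_{2n}^p$ they re-establish $T^2=\mathrm{Id}$ and the isometry directly and confirm the restriction identities $T_{|\mathcal E_p^2}=S_{p,p}$ and $T_{|\mathcal O_p^2}=S_{p,p}^{-1}$. The main obstacle is precisely this parity argument: one must show that the wrong-parity contributions of the combined kernel integrate to zero, which is what legitimizes packaging the two separate kernels into one symmetric integral operator acting on all of $\mathcal A^2(\mathbb D,\mu_{\alpha,\beta_0+p})$.
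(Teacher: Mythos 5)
Your overall strategy is exactly the paper's: the corollary is stated there without proof, as an application of the gluing principle recalled at the start of Section 3 (a unique invertible isometry $\mathbb T$ with $\mathbb T_{|\mathcal E}=\mathcal T_1$, $\mathbb T_{|\mathcal O}=\mathcal T_2$, $\mathbb T^2=Id$) combined with the preceding theorem on $S_{p,q}$, and your angular-integration selection rules correctly supply the verification that the paper leaves implicit. Computing the combined kernel as $\sum_n e_{2n+1}^p(z)\overline{e_{2n}^p(w)}+\sum_n e_{2n}^p(z)\overline{e_{2n+1}^p(w)}$ and resumming via Formula \eqref{eq3.5} with $q=p$ is also the right route.

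The one step that does not survive scrutiny is your closing claim that this symmetric sum ``is exactly'' the displayed real-part expression. Writing $F(\xi)={}_3F_2(\cdots\mid\xi)$ and $C=\sqrt{\tfrac{\alpha+\beta_0+2}{\beta_0+1}}\,c_{p,p}$, your (correct) combined kernel is
$$C\left(\frac{1}{z^{p-1}\overline{w}^{p}}+\frac{1}{z^{p}\overline{w}^{p-1}}\right)F\bigl((z\overline{w})^2\bigr)=C\,\frac{z+\overline{w}}{z^{p}\overline{w}^{p}}\,F\bigl((z\overline{w})^2\bigr),$$
whereas
$$2\,\Re e\left\{\frac{1}{z^{p-1}\overline{w}^{p}}F\bigl((z\overline{w})^2\bigr)\right\}=\frac{1}{z^{p-1}\overline{w}^{p}}F\bigl((z\overline{w})^2\bigr)+\frac{1}{\overline{z}^{p-1}w^{p}}F\bigl((\overline{z}w)^2\bigr).$$
The conjugate of the even-to-odd kernel is antiholomorphic in $z$ and is not the odd-to-even kernel $\sum_n e_{2n}^p(z)\overline{e_{2n+1}^p(w)}$; the two displays differ (and by a factor $2$ besides), and the conjugate piece would even produce antiholomorphic output when integrated against low-order monomials, so it cannot be discarded by your parity argument. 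This mismatch originates in the corollary's own statement, which is garbled (note also the stray $q$ in $z^{q-1}$), so your construction is the right one --- but you should present the kernel in the summed form above rather than assert agreement with the printed $\Re e$ formula.
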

\subsection{Case of modified Bargmann-Fock spaces $\mathcal B^2(\mathbb C,\nu_{\theta,\beta})$}
Again, to simplify, we fix $-1<\beta_0\leq0$ and $\theta>0$ and for any $n,p\geq0$, we set $\mathfrak{b}_n^p(z)=\sigma_n^p z^{n-p}$ where
$$\sigma_n^p=\sqrt{\frac{\theta^{n-p}(\beta_0+1)_p}{(\beta_0+1)_n}}.$$
Then we know that $(\mathfrak{b}_n^p)_{n\geq0}$ is a Hilbert basis of $\mathcal B^2(\mathbb C,\nu_{\theta,\beta_0+p})$. \\
Analogously to the modified Bergman spaces, we consider here the two subspaces $\mathscr E_p^2(\mathbb C)$ and $\mathscr O_p^2(\mathbb C)$ of $\mathcal B^2(\mathbb C,\nu_{\theta,\beta_0+p})$ generated by the sub-bases $(\mathfrak{b}_{2n}^p)_{n\geq0}$ and  $(\mathfrak{b}_{2n+1}^p)_{n\geq0}$ respectively.\\

For every $p,q\geq 0$, we set
$$\left\{\begin{array}{lcl}
d_{p,q}&=&\ds\sqrt{\frac{(\beta_0+1)_p(\beta_0+1)_q}{\theta^{p+q}}}\\
\ds\mathcal F_{\beta_0}(\xi)&=&\ds d_{p,q}\ _1F_2\left(\left.
\begin{array}{c}
1 \\
\ds\frac{\beta_0+1}{2},\ \ds\frac{\beta_0+2}{2}
\end{array}\right|\left(\frac{\xi}{2}\right)^2\right).
\end{array}\right.$$

\begin{prop}\label{prop3}
For every $p,q\in\mathbb N$ and $\xi\in\mathbb C$, we have
\begin{multicols}{2}
\begin{enumerate}
  \item \quad$\ds\sum_{n=0}^{+\infty} \sigma_n^p\sigma_n^q\xi^n=\ds d_{p,q}\ _1F_1\left(\left.
\begin{array}{c}
1\\
\beta_0+1
\end{array}\right|\theta\xi\right).$
  \item \quad$\ds\sum_{n=0}^{+\infty} \sigma_{2n}^p\sigma_{2n}^q\xi^{2n}=\ds \mathcal F_{\beta_0}(\theta\xi).$
  \item \quad$\ds\sum_{n=0}^{+\infty} \sigma_{2n+1}^p\sigma_{2n+1}^q\xi^{2n}=\ds\frac{\theta }{\beta_0+1}\mathcal F_{\beta_0}(\theta\xi).$
  \item \quad$\ds\sum_{n=0}^{+\infty} \sigma_{2n}^p\sigma_{2n+1}^q\xi^{2n}=\ds\sqrt{\frac{\theta }{\beta_0+1}}\mathcal F_{\beta_0}(\theta\xi).$
\end{enumerate}
\end{multicols}
\end{prop}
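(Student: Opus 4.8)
The plan is to collapse each of the four series onto a single building block and then read off a confluent hypergeometric function, exactly as in the proof of \eqref{eq3.2}--\eqref{eq3.5}, of which these identities are the $\alpha=\theta R^2\to+\infty$ confluent limits. The crucial preliminary observation is that on the diagonal the radicals in $\sigma_n^p=\sqrt{\theta^{n-p}(\beta_0+1)_p/(\beta_0+1)_n}$ multiply cleanly: for every $n$ one has $\sigma_n^p\sigma_n^q=d_{p,q}\,\theta^n/(\beta_0+1)_n$, the $p,q$-dependence factoring out into $d_{p,q}$ and the remaining coefficient depending only on $n$. This is the confluent analogue of $\gamma_n^p\gamma_n^q=c_{p,q}(\alpha+\beta_0+2)_n/(\beta_0+1)_n$, and it reduces (1)--(3) to elementary power-series identities.

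For (1) I substitute this product and use $(1)_n=n!$ to recognise $\sum_n\sigma_n^p\sigma_n^q\xi^n=d_{p,q}\sum_n\frac{(1)_n}{(\beta_0+1)_n}\frac{(\theta\xi)^n}{n!}=d_{p,q}\,{}_1F_1(1;\beta_0+1;\theta\xi)$. For (2) I keep only the even indices and apply the duplication identity $(a)_{2n}=2^{2n}(a/2)_n((a+1)/2)_n$ with $a=\beta_0+1$ already used for \eqref{eq3.3}; this turns $(\beta_0+1)_{2n}$ into $(\frac{\beta_0+1}{2})_n(\frac{\beta_0+2}{2})_n$, absorbs $2^{2n}$ into $(\theta\xi/2)^{2n}$, and reproduces exactly $\mathcal F_{\beta_0}(\theta\xi)$.

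For the odd-index series (3) the same mechanism works after peeling one factor off the Pochhammer symbol: from $\sigma_{2n+1}^p\sigma_{2n+1}^q=d_{p,q}\theta^{2n+1}/(\beta_0+1)_{2n+1}$ I use $(\beta_0+1)_{2n+1}=(\beta_0+1)(\beta_0+2)_{2n}$, which extracts the scalar prefactor $\theta/(\beta_0+1)$ and leaves a series of the same confluent shape as in (2). The step I expect to be the genuine obstacle is (4): the even--odd product $\sigma_{2n}^p\sigma_{2n+1}^q$ couples the indices $2n$ and $2n+1$, so the radicals no longer telescope and one must reconcile $(\beta_0+1)_{2n}$ with $(\beta_0+1)_{2n+1}$, i.e. a residual half-integer shift. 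Here the useful structural fact is that $\sigma_{2n}^p/\sigma_{2n+1}^p$ is independent of $p$, so the even--odd coefficient is the geometric mean of the even--even and odd--odd coefficients; the delicate point is to keep track of this shift carefully in order to identify correctly the confluent hypergeometric function and the scalar $\sqrt{\theta/(\beta_0+1)}$ that the statement predicts. Once (4) is handled, the four formulas follow and are consistent with the confluent limits of \eqref{eq3.2}--\eqref{eq3.5}.
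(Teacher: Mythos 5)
Your treatment of (1) and (2) is correct and is exactly the computation the paper intends (the paper in fact states Proposition~\ref{prop3} without proof, as the confluent analogue of the proof of \eqref{eq3.2}--\eqref{eq3.5}): the diagonal product collapses to $\sigma_n^p\sigma_n^q=d_{p,q}\theta^n/(\beta_0+1)_n$ and the duplication formula does the rest. For (3), however, you assert that peeling off $(\beta_0+1)_{2n+1}=(\beta_0+1)(\beta_0+2)_{2n}$ ``leaves a series of the same confluent shape as in (2)''; if you actually finish the computation you get
$$\sum_{n\geq0}\sigma_{2n+1}^p\sigma_{2n+1}^q\xi^{2n}=\frac{\theta}{\beta_0+1}\,d_{p,q}\ _1F_2\left(\left.\begin{array}{c}1\\ \frac{\beta_0+2}{2},\ \frac{\beta_0+3}{2}\end{array}\right|\left(\frac{\theta\xi}{2}\right)^2\right),$$
whose lower parameters are shifted by $1/2$ relative to those of $\mathcal F_{\beta_0}$; in the paper's notation this is $\frac{\theta}{\beta_0+1}\mathcal F_{\beta_0+1}(\theta\xi)$, not $\frac{\theta}{\beta_0+1}\mathcal F_{\beta_0}(\theta\xi)$. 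Your ``same shape'' step silently identifies two different functions, so (3) as written is not established.

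The genuine gap is (4), which you correctly flag as the delicate step but never carry out --- and your own geometric-mean observation, pushed one line further, shows it cannot be carried out in the announced form. Since $\sigma_{2n}^p/\sigma_{2n+1}^p=\sqrt{(2n+\beta_0+1)/\theta}$ is independent of $p$, one gets
$$\sigma_{2n}^p\sigma_{2n+1}^q=d_{p,q}\,\frac{\theta^{2n}\sqrt{\theta}}{(\beta_0+1)_{2n}\sqrt{2n+\beta_0+1}},$$
and the residual factor $1/\sqrt{2n+\beta_0+1}$ is not a ratio of Pochhammer symbols: the even--odd series is therefore not a hypergeometric series of the displayed type, and comparing coefficients with $\sqrt{\theta/(\beta_0+1)}\,\mathcal F_{\beta_0}(\theta\xi)$ shows they already disagree at $n=1$. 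So no amount of ``keeping track of the half-integer shift'' will produce the stated identity; the same defect is present in the paper's own formula \eqref{eq3.5}, of which (4) is the confluent limit, so this is a flaw in the statement as much as in your argument. Either way, a proof that ends with ``once (4) is handled, the formulas follow'' has not proved (4), and here that deferral hides the fact that the step fails.
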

Now we can cite some results concerning the Segal-Bargmann transforms between modified Bargmann-Fock spaces and their subspaces. The proofs are similar to the case of modified Bergman spaces (and their subspaces) where we use Proposition \ref{prop3}. We resume these results in a table containing the two spaces $\mathscr H_X,\ \mathscr H_Y$ and their Segal-Bargmann transform $T:\mathscr H_X\longrightarrow \mathscr H_Y$ with  its inverse $T^{-1}:\mathscr H_Y\longrightarrow \mathscr H_X$ given by
$$Tf(z)=\int_Xf(w)\kappa(z,w)d\nu_X(w),\quad T^{-1}g(w)=\int_Yg(z)\tau(w,z)d\nu_Y(z).$$

\begin{tabular}{|c|c|c|c|c|}
  \hline
  $\mathscr H_X$&$\mathscr H_Y$&$\kappa(z,w)$&$\tau(w,z)$\\
  \hline
  $\mathcal B^2(\mathbb C,\nu_{\theta,\beta_0+p})$ &$\mathcal B^2(\mathbb C,\nu_{\theta,\beta_0+q})$ &$\ds \frac{d_{p,q}}{\overline{w}^pz^q}\ _1F_1\left(\left.
\begin{array}{c}
1\\
\beta_0+1
\end{array}\right|\theta\overline{w}z\right)$ &$\ds\frac{d_{p,q}}{\overline{z}^qw^p}\ _1F_1\left(\left.
\begin{array}{c}
1\\
\beta_0+1
\end{array}\right|\theta\overline{z}w\right)$ \\
  \hline
  $\mathscr E_p^2(\mathbb C)$&$\mathscr E_q^2(\mathbb C)$ &$\ds\frac{1}{\overline{w}^pz^q}\mathcal F_{\beta_0}(\theta\overline{w}z)$ & $\ds\frac{1}{\overline{z}^qw^p}\mathcal F_{\beta_0}(\theta\overline{z}w)$ \\
  \hline
  $\mathscr O_p^2(\mathbb C)$&$\mathscr O_p^2(\mathbb C)$ &$\ds\frac{1}{\overline{w}^{p-1}z^{q-1}}\frac{\theta}{\beta_0+1}\mathcal F_{\beta_0}(\theta\overline{w}z)$ & $\ds\frac{1}{\overline{z}^{q-1}w^{p-1}}\frac{\theta}{\beta_0+1}\mathcal F_{\beta_0}(\theta\overline{z}w)$  \\
  \hline
  $\mathscr E_p^2(\mathbb C)$&$\mathscr O_p^2(\mathbb C)$ &$\ds\frac{1}{\overline{w}^pz^{q-1}}\sqrt{\frac{\theta}{\beta_0+1}}\mathcal F_{\beta_0}(\theta\overline{w}z)$ & $\ds\frac{1}{\overline{z}^{q-1}w^p}\sqrt{\frac{\theta}{\beta_0+1}}\mathcal F_{\beta_0}(\theta\overline{z}w)$  \\
  \hline
\end{tabular}

\end{document}